\newcommand{\Rmnum}[1]{\expandafter\@slowromancap\romannumeral #1@}
\newcommand{\black}{\textcolor{black}}
\newtheorem{theorem}{Theorem}
\newtheorem{proposition}{Proposition}
\newtheorem{lemma}{Lemma}[section]
\newtheorem{definition}{Definition}
\newtheorem{assumption}{Assumption}
\newenvironment{shrinkeq}[1]
{\bgroup
	\addtolength\abovedisplayshortskip{#1}
	\addtolength\abovedisplayskip{#1}
	\addtolength\belowdisplayshortskip{#1}
	\addtolength\belowdisplayskip{#1}}
{\egroup\ignorespacesafterend}
\title{Unbiased Markov chain quasi-Monte Carlo for Gibbs samplers}
\author[1]{Jiarui Du}
\author[2]{Zhijian He \thanks{Corresponding author: hezhijian@scut.edu.cn}}
\affil[1,2]{School of Mathematics, South China University of Technology, Guangzhou 510641, Guangdong, People’s Republic of China}
\begin{document}
	
	\maketitle
	
	\begin{abstract}
	In statistical analysis, Monte Carlo (MC) stands as a classical numerical integration method. When encountering challenging sample problem, Markov chain Monte Carlo (MCMC) is a commonly employed method. However, the MCMC estimator is biased after a fixed number of iterations. Unbiased MCMC, an advancement achieved through coupling techniques, addresses this bias issue in MCMC. It allows us to run many short chains in parallel. Quasi-Monte Carlo (QMC), known for its high order of convergence, is an alternative of MC. By incorporating the idea of QMC into MCMC, Markov chain quasi-Monte Carlo (MCQMC) effectively reduces the variance of MCMC, especially in Gibbs samplers. This work presents a novel approach that integrates unbiased MCMC with MCQMC, called as an unbiased MCQMC method. This method renders unbiased estimators while improving the rate of convergence significantly. Numerical experiments demonstrate that for Gibbs sampling, unbiased MCQMC with a sample size of $N$ yields a faster root mean square error (RMSE) rate than the  \(O(N^{-1/2})\) rate of unbiased MCMC, toward an RMSE rate of \(O(N^{-1})\) for low-dimensional problems. Surprisingly, in a challenging problem of 1049-dimensional P\'olya Gamma Gibbs sampler, the RMSE can still be reduced by several times for moderate sample sizes. In the setting of parallelization, unbiased MCQMC also performs better than unbiased MCMC, even running with short chains. 
	\end{abstract}
	
	
	\section{Introduction}\label{intro}
	As widely known, Monte Carlo (MC) method is utilized to estimate the expectation $E_\pi[f(\black{\bm{X}})]$ for a target distribution $\pi$ with respect to a certain function $f$. Over the past few decades, MC has been widely applied in diverse fields such as science, engineering, finance, industry, and statistical inference \citep{glasserman2004,owenmc}. In practical applications, MC requires sampling from the target distribution. In cases where direct sampling from the target distribution is not feasible, such as the posterior distribution in Bayesian computation, one often resorts to Markov chain Monte Carlo (MCMC) method. In MCMC, a Markov chain is simulated, and sample averages are used to estimate the expectation of the target distribution. Classical MCMC algorithms include Metropolis-Hastings (M-H) samplers and Gibbs samplers. For \black{Harris recurrent} MCMC algorithm, as the number of iterations increases, sample averages converge to the expectation with probability 1 (w.p.1), i.e.,
	$$
	\frac{1}{N}\sum\limits_{i=1}^{N}f(\black{{\bm{X}_i}}) \stackrel{w.p.1}{\to} E_\pi[f(\black{\bm{X}})],
	$$
	where $N$ is the sample size, and $(\black{{\bm{X}_t}})_{t\ge0}$ is a Markov chain with $\pi$ as its stationary distribution. This ensures the consistency of MCMC algorithms  \citep{robert2004}. 
	
	However, for a fixed number of iterations, if the Markov chain is initialized from a state outside the stationary distribution, it is uncertain when the chain will enter the stationary distribution. Consequently, the obtained samples always contain a portion that does not follow the stationary distribution, leading to a bias known as ``burn-in bias". \black{This bias presents challenges for MCMC parallelization and has prompted the development of various methods, such as the circularly-coupled Markov chain proposed by Neal \cite{neal2017}.} Recently, Glynn and Rhee \cite{glynn2014} utilized coupling techniques to entirely eliminate the mean bias of the Markov chain traversal average as represented by iterative stochastic functions. Jacob et al. \cite{jacob2019} eliminated the bias of conditional particle filters by using coupling chains. Jacob et al. \cite{jacob2020} then proposed an unbiased MCMC method, which results in unbiased estimators for M-H samplers and Gibbs samplers. \black{This method has also been widely applied in other fields, such as Hamiltonian Monte Carlo \citep{heng2019} and Bayesian inference with intractable likelihoods \citep{middleton2020}.}
	
	Another limitation of MC in practical applications is due to its slow root mean square error (RMSE) rate $O(N^{-1/2}$) for $N$ samples. This rate also holds for MCMC and unbiased MCMC. To improve MC, one may use deterministic low-discrepancy sequences instead of independent and identically distributed (IID) random sequences, known as quasi-Monte Carlo (QMC) methods \citep{bookdick2010,booknieder1992}.  For a $d$-dimensional integral, QMC yields deterministic error bounds of $O(N^{-1}(\log N)^d)$ for certain regular functions, which are asymptotically superior to MC rate $O(N^{-1/2})$. Given the advantages of QMC over MC, it is worthwhile to consider the idea of replacing IID sequences in MCMC processes with deterministic sequences. However, directly substituting typical low-discrepancy sequences with IID sequences may not produce correct results. This is due to the fact that deterministic low-discrepancy sequences exhibit strong correlations, which disrupt the Markov property of the Markov chain in the MCMC process, as discussed in  \citep[Section 3.2]{tribble}. Therefore, another type of uniform sequences is required. Motivated by Niederreiter \cite{niederreiter1986}, Owen and Tribble \cite{owen2005, tribble2008} proposed Markov chain quasi-Monte Carlo (MCQMC) method. They have proved that using completely uniformly distributed (CUD) or weakly CUD (WCUD) sequences in discrete spaces can yield consistent results. Chen et al. \cite{chen2011} extended these consistency results to continuous spaces. For properties related to (W)CUD sequences, we refer to \citep{niederreiter1986,tribble2008}.

	Currently, the theoretical foundations for determining the convergence rate of MCQMC are weak. \black{If the update function is a strong contractive mapping, along with other conditions}, Chen \cite{chen} proved that for any \(\epsilon > 0\), MCQMC exhibits a convergence order similar to QMC methods, i.e., \(O(N^{-1+\epsilon})\). However, this result does not represent a typical absolute error for all samples mean and the required sequence is related to the strong contracting condition. Liu \cite{liu2023} applied Chen's convergence theory to Langevin diffusion models. Dick et al. \cite{dick2014,dick2016} presented a Koksma-Hlawka inequality for MCQMC. \black{Under the assumption that the transition kernel is uniformly ergodic, along with other conditions}, they demonstrated the existence of a driving sequence achieving a convergence rate for MCQMC of \(O(N^{-1/2}(\log N)^{1/2})\). Furthermore, if the update function satisfies the ``anywhere-to-anywhere" condition, there exists a driving sequence that results in a convergence rate of $O(N^{-1+\epsilon})$. However, this result is merely an existence result.
	
	\black{Although there is currently no theoretical framework in the literature to explain the high order convergence rate of (W)CUD sequences, empirical results have confirmed their satisfactory performance,}
	especially in Gibbs samplers. For example, it was observed in \citep{harase2021} that Harase's method yields several orders of magnitude reduction in variance. These findings motivate us to integrate unbiased MCMC with MCQMC, with the goal of preserving the unbiasedness of MCMC estimators while simultaneously reducing the variance of the estimators. To this end, we propose a method of unbiased MCQMC for Gibbs samplers. The unbiased MCQMC method is beyond a mere amalgamation of unbiased MCMC and MCQMC. The challenges involved include determining how to effectively utilize array-(W)CUD sequences in unbiased MCMC to enhance the performance of MCQMC. Moreover, we should note that the chains generated with array-(W)CUD sequences are no longer Markovian.
	
	The contribution of this paper is three-fold. Firstly, we introduce the unbiased MCQMC method by integrating unbiased MCMC and MCQMC and provide a strategy for utilizing array-(W)CUD sequences in unbiasd MCQMC. This method only requires the use of array-(W)CUD sequences in the partial sampling process of unbiased MCMC. Secondly, we prove the unbiasedness of the proposed methods and theoretically establish the upper bounds on the second moment of the bias term in unbiased MCQMC without relying on the Markov property, which was required in \citep{jacob2020}. \black{Lastly, numerical experiments across various dimensions demonstrate that the RMSE of unbiased MCQMC is significantly reduced compared to that of unbiased MCMC and exhibits a faster convergence rate than $O(N^{-1/2})$.}
	
	The paper is organized as follows. Section \ref{sec:back} provides a brief introduction to QMC, MCMC, MCQMC, and unbiased MCMC. Section \ref{sec:unmcqmc} presents a comprehensive framework for the unbiased MCQMC method. Section \ref{sec:theory} establishes the unbiasedness of the estimator in unbiased MCQMC, accompanied by a discussion of the estimator's variance. Section~\ref{sec:experiments} provides numerical experiments for different dimensional Gibbs samplers to show the  the effectiveness of unbiased MCQMC. The conclusions and discussions are given in Section \ref{sec:conclusions}. Scripts of MATLAB (MATLAB R2023b) are available from \url{https://github.com/Jiarui-Du/ubmcqmc}.
	
	\section{Background}
	\label{sec:back}
	\subsection{Quasi-Monte Carlo}
	QMC is a deterministic version of MC, known for its higher order rate of convergence compared to MC. Here, we provide a brief introduction to QMC, and further information can be found in the monographs \cite{bookdick2010, booknieder1992}. 
	
	QMC is commonly used for the numerical integration of  $d$-dimensional integral $\mu = \int_{(0,1)^d} f(\black{\bm{u}}) \mathrm{d} \black{\bm{u}}$. The estimator is \(\black{\hat{\mu}} = \frac{1}{N} \sum_{i=1}^N f(\bm{u}_{i})\), where $\bm{u}_{i}\in (0,1)^d$. In comparison to MC, where $\bm{u}_{i}$ are IID, $\{\bm{u}_{i}\}_{i=1}^N$ used in QMC are low-discrepancy points. The discrepancy of a point set is typically quantified using the star discrepancy, which is defined by
	\begin{equation}\label{eq:stardis}
		D_N^*\left(\bm{u}_{1}, \ldots, \bm{u}_{N}\right) = \sup_{\bm{z} \in (0,1)^d} \left|\frac{1}{N} \sum_{i=1}^N 1_{\left\{\bm{u}_{i} \in [\mathbf{0}, \black{\bm{z}})\right\}} - \prod_{j=1}^d z_j\right|.
	\end{equation}
	A point set with a star discrepancy of $O(N^{-1}(\log N)^{d})$ is referred to as a low-discrepancy point set. Commonly used QMC point sets include Sobol' sequences, Faure sequences, Halton sequences, lattice rules, and others. The star discrepancy plays a crucial role in determining the estimation error of integrals over $(0,1)^d$. The classical Koksma-Hlawka inequality provides an error bound, i.e.,
	\begin{align}\label{eq:KH_inq}
		|\black{\hat{\mu}}-\black{\mu}|\leq D_N^*\left(\bm{u}_{1}, \ldots, \bm{u}_{N}\right) V_{\mathrm{HK}}(f),
	\end{align}
	where $V_{\mathrm{HK}}(\cdot)$ is the variation (in the sense of Hardy and Krause) of a function. For the definition and properties of $V_{\mathrm{HK}}(\cdot)$, we refer to \citep{owen2005hk}. To facilitate error estimation, randomized QMC (RQMC) is commonly used. Typical randomization methods include random shifts \citep{cranley1976}, digital shifts \citep{owenqmc} and scrambling \citep{owen1995}.
	
	\subsection{Markov chain Monte Carlo}\label{mcmc}
	MCMC extends the scope of MC. In this section, following \citep[Section 2.4]{chen2011}, we shall briefly introduce MCMC by update functions. Given the target distribution $\pi$ on $\Omega \subseteq \mathbb{R}^d$ and a measurable real function $f$ which is integrable with respect to $\pi$, our goal is to sample $\black{\bm{X}} \sim \pi$ and approximate $\mu(f) = \int f(\black{\bm{x}})\pi(\black{\bm{x}})d\black{\bm{x}}$. Consider a Markov chain $(\black{{\bm{X}_t}})_{t\ge0}$ with $P$ as its transition kernel and an update function $\phi$ such that $\pi$ is a stationary distribution for $(\black{{\bm{X}_t}})_{t\ge0}$ and $\pi_0$ as some initial distribution on $\Omega$. We start by $\black{{\bm{X}_0}} \sim \pi_0$, and then, for $i\ge 1$, the state updating process is defined as $\black{{\bm{X}_i}} = \phi(\black{{\bm{X}_{i-1}}},\bm{u}_{i}) \sim  P(\black{{\bm{X}_{i-1}}},\cdot)$, where $\bm{u}_{i} \sim \mathbf{U}(0,1)^d$. \black{Here, the dimension of \(\bm{u}_{i}\) does not necessarily have to be \(d\), but for simplicity, we keep it consistent with the dimension of the states \(\bm{X}_i\)}. Then we estimate $\mu(f)$ by $\frac{1}{N}\sum_{i=1}^{N}f(\black{{\bm{X}_i}})$. 
	The update functions for the Gibbs sampler are shown in the following.
	
	\textbf{Sequential Scan Gibbs Update.}  \black{Let the current state be $\bm{X} = ({\bm{x}_1},\ldots,{\bm{x}_s}) \in \Omega\subset \mathbb{R}^d$ with ${\bm{x}_i} \in \mathbb{R}^{d_i}$ and $d = \sum_{i=1}^s d_i$. Let $\bm x_{-i}$ be all  $\bm x_j$ with $j\neq i$.  Denote $\psi_i({{\bm{x}_{-i}}},\cdot) : (0,1)^{d_i} \to \mathbb{R}^{d_i}$ as a $d_i$-dimensional generator of the full conditional distribution of ${\bm{x}_i}$ given  $\bm x_{-i}$. For $\bm{u} = ({\bm{v}_1},\ldots,{\bm{v}_s}) \in (0,1)^d$ with ${\bm{v}_i} \in (0,1)^{d_i}$,  the next state can be updated via the update function 
		\begin{align}\label{gibbs}
			\bm{Y}= ({\bm{y}_1},\ldots,{\bm{y}_s}) = \phi\left(\bm{X},\bm{u}\right) ,
		\end{align}
		where $\bm{y}_i = \psi_i(\bm{x}_{[i]},\bm{v}_i)$ and $\bm{x}_{[i]} = \left(\bm{y}_1, \ldots, \bm{y}_{i-1}, \bm{x}_{i+1}, \ldots,\bm{x}_{s}\right), 1\le i \le s$.}
	
	To formulate the update function \eqref{gibbs}, one needs to transform the process of state transition into $s$ generator functions of uniform random variables. There are several ways to generate non-uniform random variables using uniform random variables \cite{devroye1986,owenmc}. 
	A commonly used method is the Rosenblatt–Chentsov transformation \cite{rosenblatt:1952}. Consider a  generic  random vector $\bm x\in\mathbb{R}^d$ with a joint CDF $F(\bm x)$. Let $F_1(x_1)$ be the marginal CDF of the first component $x_1$ and for $j = 2,\ldots,d$, let $F_j(\cdot|x_1,\ldots,x_{j-1})$ be the conditional CDF of $x_j$ given $x_1,\ldots,x_{j-1}$. To sample $\bm x\sim F$, the Rosenblatt–Chentsov transformation takes   
	$$
	x_1 = F_1^{-1}(u_1),~~\text{and}~~x_j = F_j^{-1}(u_j|x_1,\ldots,x_{j-1}),~~j= 2,\ldots,d,
	$$
	where $\bm{u} \sim \mathbf{U}(0,1)^d$. If the components of $\bm x$ are independent, it turns out to  take the inversion of the marginal CDFs.
	
	However, for many distributions including the standard normal distribution, the inverse CDF does not have a closed form. Fortunately, some remarkable numerical algorithms have been proposed to approximate inverse CDFs of common distributions. For example, the inverse CDF of the standard normal distribution can be approximated by high-precision numerical algorithms \cite{owenmc}. On the other hand, one can use numerical root-finding methods, such as Newton's method, to obtain inverse CDFs. In Section \ref{subsec:logistic}, we study the impact of different ways to formulating \eqref{gibbs} on the performance of unbiased MCQMC.
	
	\subsection{Markov chain quasi-Monte Carlo}\label{mcqmc}
	Replacing IID $\black{\mathbf{U}}(0,1)$ points with carefully designed sequences in MCMC is expected to yield more accurate estimates. In this section, we review MCQMC by following the introduction in \citep[Chapter 3]{tribble}. 
	
	Suppose we want to collect $N$ samples, each requiring $d$ uniform variables. The required uniform variables can be organized into a variable matrix as follows,
	\begin{equation}\label{eq:vm}
		\left[\begin{array}{cccc}
			v_1 & v_2 & \cdots & v_{d} \\
			v_{d+1} & v_{d+2} & \cdots & v_{2d} \\
			\vdots & \vdots & \ddots & \vdots \\
			v_{(N-1)d+1} & v_{(N-1) d+2} & \cdots & v_{Nd}
		\end{array}\right],
	\end{equation}
	where $v_i\stackrel{iid}\sim \black{\mathbf{U}}(0,1)$. The sequence $(v_1, v_2, \ldots, v_{Nd})$ is referred to as the ``driving sequence" of MCMC. The driving sequence should be designed carefully with certain uniformity in MCQMC. In this paper, (W)CUD sequences defined below are used as driving sequences.
	
	\begin{definition}
		A \black{deterministic} infinite sequence $(v_i)_{i\ge 1}$ in $(0,1)$ is called as a CUD sequence if for any $d \ge 1$, 
		$$
		D_N^*\left((v_{1},\ldots,v_{d}),(v_{2},\ldots,v_{d+1}), \ldots, (v_{N},\ldots,v_{N+d})\right) \to 0, \text{~as~} N\to\infty,
		$$
		where $D_N^*$ is defined in \eqref{eq:stardis}. A \black{random} infinite sequence $(v_i)_{i\ge 1}$ in $(0,1)$ is called as a WCUD sequence if for any $\epsilon>0$ and $d \ge 1$,
		$$
		\mathbb{P}\left(D_N^*\left((v_{1},\ldots,v_{d}),(v_{2},\ldots,v_{d+1}), \ldots, (v_{N},\ldots,v_{N+d})\right) > \epsilon\right) \to 0, \text{~as~} N\to\infty.
		$$    
	\end{definition}
	
	Theorem 3 in \cite{owen2005} stated that using (W)CUD sequences in discrete spaces can yield (weakly) consistent results under two technical conditions. First, the pre-images of the update function $\phi$ in $[0,1]^d$ corresponding to transitions from one state to another state must be Jordan measurable. This is  referred to as the regular proposal condition. Second, the update function  must be chosen such that weak consistency holds when using IID driving sequence. Chen et al. \cite{chen2011} demonstrated (weak) consistency results for the use of (W)CUD sequences in continuous spaces. The second condition in Theorem 3 in \cite{owen2005} is also required in continuous spaces. Additionally, for M-H sampler, the Jordan measurability of pre-images in multistage transitions is also required, while Gibbs sampler requires a form of contraction mapping. See Theorems 2 and 3 in \cite{chen2011} for details.
	
	In practice, we use finite (W)CUD sequences. So the triangular array (W)CUD sequences is more useful. A triangular array $(v_{N,1},v_{N,2},\ldots,v_{N,N})$ for $N$ in an infinite positive integers set $\mathscr{N}$ is called as array-CUD, if for any $d \ge 1$, as $N\to \infty$ through the values in $\mathscr{N}$, 
	\begin{equation}\label{eq:arrcud}
		D_{N-d+1}^*\left((v_{N,1},\ldots,v_{N,d}),(v_{N,2},\ldots,v_{N,d+1}), \ldots, (v_{N,N-d+1},\ldots,v_{N,N})\right)\to 0.
	\end{equation} 
	Array-WCUD is defined analogously. 
	
	Currently, several methods are recognized for constructing array-(W)CUD sequences. These include Liao's  method \cite{liao1998}, the Multiplicative Congruential Generator (MCG) method \cite{niederreiter1977,owen2005,tribble2008}, and the commonly used Linear Feedback Shift Register (LFSR) generator, also called Tausworthe generators \cite{chen2012,harase2021,harase2024,tribble}. In this paper, we refer to the generator in \citep{harase2021} as Harase's method and the generator in \citep{liao1998} as Liao's method. In the existing experimental results, Harase's method performs well on sample sizes $N \in \{2^{10},2^{11},\ldots,2^{32}\}$. However, it is limited to these specific sizes, whereas Liao's method does not have any specific requirements on the sample size. We will employ both methods in our experiments. We defer the detailed introduction of the two methods to Appendix \ref{app:method}.
	
	\subsection{Unbiased Markov chain Monte Carlo}
	By using coupling techniques, Jacob et al. \cite{jacob2020} proposed the unbiased MCMC method to addresses this bias issue in MCMC. In this section, we follow \citep{jacob2020} to construct an unbiased estimator for $E_\pi[f(\black{\bm{X}})]$. Let $P$ be the Markov transition kernel on $\Omega$ with $\pi$ as its stationary distribution, and let $\bar{P}$ be a transition kernel on the joint space $\Omega \times \Omega$. 
	Consider two Markov chains \((\black{{\bm{X}_t}})_{t\ge0}\) and \((\black{{\bm{Y}_t}})_{t\ge0}\) with \(\pi_0\) as their initial distribution on $\Omega$. Initially, two independent samples \(\black{{\bm{X}_0}}\) and \(\black{{\bm{Y}_0}}\) are drawn from \(\pi_0\). Following this, given \(\black{{\bm{X}_0}}\), the sample \(\black{{\bm{X}_1}}\) is drawn from \(P\). Then, for \(t\ge1\), given the pair \((\black{{\bm{X}_t}},\black{{\bm{Y}_{t-1}}})\), the pair \((\black{{\bm{X}_{t+1}}},\black{{\bm{Y}_{t}}})\) is drawn from \(\bar{P}\).
	Assume that \(\bar{P}\) is a coupling kernel for \((\black{{\bm{X}_t}})_{t\ge0}\) and \((\black{{\bm{Y}_t}})_{t\ge0}\), satisfying \(\black{{\bm{X}_{t+1}}}|(\black{{\bm{X}_t}},\black{{\bm{Y}_{t-1}}}) \sim P(\black{{\bm{X}_t}},\cdot)\) and \(\black{{\bm{Y}_{t}}}|(\black{{\bm{X}_{t}}},\black{{\bm{Y}_{t-1}}}) \sim P(\black{{\bm{Y}_{t-1}}},\cdot)\). Thus, for any \(t\ge0\), \(\black{{\bm{X}_t}}\) and \(\black{{\bm{Y}_t}}\) follow identical distributions. Moreover, the design of the coupling kernel \(\bar{P}\) ensures the existence of a random variable referred to as the meeting time \(\tau\), such that for any \(t\ge\tau\), \(\black{{\bm{X}_t}} = \black{{\bm{Y}_{t-1}}}\). \black{Assume that as $t \to \infty$, $E[f({\bm{X}_t})] \to E_\pi[f(\bm{X})]$}. For a given integer \(k\ge0\), we have
	\begin{shrinkeq}{-1ex}
		\begin{equation}\label{eq:ub}
			\begin{aligned}
				E_\pi[f(\black{\bm{X}})] = \lim\limits_{t\to\infty}E[f(\black{{\bm{X}_t}})] 
				&= E[f(\black{{\bm{X}_k}})]+\sum_{l=k+1}^\infty E[f(\black{{\bm{X}_l}})]-E[f(\black{{\bm{X}_{l-1}}})]\\
				&= E\left[f(\black{{\bm{X}_k}})+\sum_{l=k+1}^\infty \{f(\black{{\bm{X}_l}})-f(\black{{\bm{Y}_{l-1}}})\}\right]\\ 
				&= E\left[f(\black{{\bm{X}_k}})+\sum_{l=k+1}^{\tau-1} \{f(\black{{\bm{X}_l}})-f(\black{{\bm{Y}_{l-1}}})\}\right]. 
			\end{aligned}
		\end{equation}
	\end{shrinkeq}
	If $\tau-1<k+1$, $\sum_{l=k+1}^{\tau-1}\{f(\black{{\bm{X}_l}})-f(\black{{\bm{Y}_{l-1}}})\}$ is zero by convention. 
	It is worth noting that  \eqref{eq:ub} is not a formal derivation. Jacob et al. \cite{jacob2020} have proven the unbiasedness of the following estimator based on Assumptions \ref{ass1}-\ref{ass3}, without requiring the interchangeability of expectation and infinite sum,
	\begin{shrinkeq}{-1ex}
		\begin{align*}
			F_k(\black{\mathbf{X}},\black{\mathbf{Y}}):=f(\black{{\bm{X}_k}})+\sum_{l=k+1}^{\tau-1}\{f(\black{{\bm{X}_l}})-f(\black{{\bm{Y}_{l-1}}})\},
		\end{align*}
	\end{shrinkeq}
	where $\black{\mathbf{X}}$ and $\black{\mathbf{Y}}$ represent two coupled Markov chains \((\black{{\bm{X}_t}})_{t\ge0}\) and \((\black{{\bm{Y}_t}})_{t\ge0}\). Furthermore, \black{for a given integer \(m\geq k\), we can run $\max(m,\tau)$ iterations, representing the length of the coupled chains}, and compute the unbiased estimator \(F_l(\black{\mathbf{X}},\black{\mathbf{Y}})\) where \(l=k,k+1,\ldots,m\). Then by taking the average of these \(m-k+1\) unbiased estimators $F_{k:m}(\black{\mathbf{X}},\black{\mathbf{Y}}) = \sum_{l=k}^{m}F_l(\black{\mathbf{X}},\black{\mathbf{Y}})/(m-k+1)$, we obtain the time-averaged estimator as follows,
	\begin{shrinkeq}{-1ex}
		\begin{equation}\label{eq:Fkm}
			\begin{aligned}
				F_{k:m}(\black{\mathbf{X}},\black{\mathbf{Y}}) 
				&=\frac{1}{m-k+1}\sum_{l=k}^{m}f(\black{{\bm{X}_l}})+\sum_{l=k+1}^{\tau-1}\min\!\left(1,\frac{l-k}{m-k+1}\right)\{f(\black{{\bm{X}_l}})-f(\black{{\bm{Y}_{l-1}}})\}\\
				&=:\text{MCMC}_{k:m}+\text{BC}_{k:m}.
			\end{aligned}	
		\end{equation}
	\end{shrinkeq}
	When considering \((\black{{\bm{X}_t}})_{t\ge0}\) only, it is a standard Markov chain. The first term $\text{MCMC}_{k:m}$ in \eqref{eq:Fkm} can be interpreted as the average value of the standard Markov chain after \(m\) iterations, with a burn-in period of \(k\). Clearly, it is biased. Thus, the second term $\text{BC}_{k:m}$ in \eqref{eq:Fkm} can be considered as a bias correction term. 
	To obtain the unbiased estimator \eqref{eq:Fkm}, it is necessary to know that how to construct the specific coupling kernel \(\bar{P}\) and how to sample from \(\bar{P}\). Following \citep{jacob2020}, we consider the maximal coupling method. For other coupling methods, we refer to \citep[Section 4]{jacob2020}. The maximal coupling of distributions \(p\) and \(q\) on the space \(\Omega\) is the joint distribution of the random variables \((Z, Z^{\prime})\). This distribution satisfies the marginal distributions \(Z \sim p\) and \(Z^{\prime} \sim q\), and it maximizes the probability of the event \(\{Z=Z^{\prime}\}\). 
	
	By incorporating the array-(W)CUD sequences in unbiased MCMC, we have successfully accelerated the convergence speed of unbiased MCMC with MCQMC. Next, we provide the construction of the unbiased MCQMC method in detail, along with some theoretical results.
	
	\section{Unbiased Markov chain quasi-Monte Carlo}\label{sec:unmcqmc}
	In unbiased MCMC, we construct two Markov chains, while MCQMC deals with only one chain. When applying MCQMC to unbiased MCMC, a fundamental question arises. Which chain's IID sequence should be replaced with a (W)CUD sequence --- only dealing with \black{chain $\mathbf{X}$, only dealing with chain $\mathbf{Y}$, or both chains $\mathbf{X}$ and $\mathbf{Y}$}?  Combined with MCQMC, we describe the sample process of unbiased MCMC as driven by two variable matrices. For easy of notation, we rewrite the sample process as follows, which is a combination of Algorithms 1 and 2 for Gibbs sampler in \citep{jacob2020}. 
	
	\begin{algorithm}
		\caption{Unbiased estimator $F_{k:m}(\mathbf{X},\mathbf{Y})$ by IID sequences for Gibbs sampler}
		\label{alg:ubest_rand}
		\begin{algorithmic}[1]
			\STATE{Given integers $k$, $m\ge k$, initial distribution $\pi_0$, update function $\phi$ and generator function $\psi_i$ defined in \eqref{gibbs}, conditional density $P_i,i=1,2,\ldots,s$ of the components of each part with $d = \sum_{i=1}^s d_i$.} 
			\STATE{Sample ${\bm{X}_0},{\bm{Y}_0}\sim \pi_0$, and $\bm{V}_1 \sim \mathbf{U}(0,1)^d$, set ${\bm{X}_{1}} = \phi({\bm{X}_0},\bm{V}_1)$, $t=1,\tau=\infty$}
			\STATE{Repeat steps 4-14 until $t \ge \max(m,\tau)$}
			\IF{$t < \tau$}
			\STATE{\black{Sample $\bm{V}_{t+1} \sim \mathbf{U}(0,1)^d$ and set $({\bm{v}_1}, \ldots,{\bm{v}_s}) = {\bm{V}_{t+1}}$}}
			\STATE{\black{Set $({\bm{x}_1}, \ldots,{\bm{x}_s}) = {\bm{X}_t}, ({\bm{y}_1}, \ldots,{\bm{y}_s}) = \bm{Y}_{t-1}$}}
			\FOR{$i$ in $1$ to $s$} 
			\STATE{\black{Set $p_i(\cdot) = P_i({\bm{x}_{-i}},\cdot),q_i(\cdot) = P_i({\bm{y}_{-i}},\cdot)$}}
			\STATE{\black{Sample $w \sim \mathbf{U}(0,1)$, set ${\bm{x}_{i}} = \psi_i({\bm{x}_{-i}},\bm{v}_i)$, if $p_i({\bm{x}_{i}})w \le q_i({\bm{x}_{i}})$, set ${\bm{y}_{i}} = {\bm{x}_{i}}$, otherwise, sample $\bm{v} \sim \mathbf{U}(0,1)^{d_i}$ and $w' \sim \mathbf{U}(0,1)$, set ${\bm{y}_{i}} = \psi_i({\bm{y}_{-i}},\bm{v})$ until $q_i({\bm{y}_{i}})w' > p_i({\bm{y}_{i}})$}}
			\ENDFOR
			\STATE{\black{Set ${\bm{X}_{t+1}} = ({\bm{x}_1}, \ldots,{\bm{x}_s}), {\bm{Y}_{t}} = ({\bm{y}_1}, \ldots,{\bm{y}_s})$. If ${\bm{X}_{t+1}} = {\bm{Y}_{t}}$, set $\tau = t$ and $t = t+1$}} 
			\ELSE
			\STATE{Sample $\bm{V}_{t+1} \sim \mathbf{U}(0,1)^d$, set ${\bm{X}_{t+1}} = \phi({\bm{X}_t},\bm{V}_{t+1}), {\bm{Y}_t} = {\bm{X}_{t+1}}$ and $t = t+1$}
			\ENDIF
			\STATE{Return $F_{k:m}(\mathbf{X},\mathbf{Y})$ by \eqref{eq:Fkm}}
		\end{algorithmic}
	\end{algorithm}
	Following \citep{jacob2020}, in each iteration of Algorithm \ref{alg:ubest_rand}, we apply the maximal coupling method to every $d_i$-dimensional block. Since Gibbs sampling relies on full conditional distributions, the coupling of one block occurs after the coupling of other related blocks. Although this approach does not have a theoretical guarantee of producing complete coupling of the chain within a reasonable time, it performs well in practice.
	
	For the chain \(({\bm{X}_t})_{t\ge1}\), its variable matrix is
	
	\small
	\[V_\mathbf{X} = 
	\left[ {\begin{array}{*{1}{c}}
			\bm{V}_{1}\\
			\bm{V}_{2}\\
			\vdots \\
			\bm{V}_{\kappa}
	\end{array}} \right] = 
	\left[ {\begin{array}{*{4}{c}}
			v_1&v_2& \cdots &v_d\\
			v_{d + 1}&v_{d + 2}& \cdots &v_{2d}\\
			\vdots & \vdots & \ddots & \vdots \\
			v_{(\kappa - 1)d + 1}&v_{(\kappa - 1)d + 2}& \cdots &v_{\kappa d}
	\end{array}} \right],\]
	\normalsize
	where $\kappa = \max(m,\tau)$, and $\bm{V}_{i} =(v_{(i-1)d+1},\dots,v_{id})\stackrel{iid}\sim \mathbf{U}(0,1)^{d}$ for $1\le i \le \kappa$. In practice, when using array-(W)CUD sequences, it is necessary to determine the length of the sequences in advance. Given $d$ and $m$, the number of columns in $V_\mathbf{X}$ is fixed, but the number of rows in $V_\mathbf{X}$ is related to the random variable $\tau$. Fortunately, since it takes the maximum value of $m$ and $\tau$, we can set the number of rows of the variable matrix driven by an array-(W)CUD sequence to be \(m\). If the selected \(m < \tau\), we use an IID sequence with an unspecified length to fill in the gap. We refer to this method as ``CUD-IID” method. We will demonstrate the rationality behind this approach below. Nevertheless, in the maximal coupling method, the sampling of the chain $\mathbf{Y}$ is similar to the acceptance-rejection sampling. The number of variables needed for each update of \(({\bm{Y}_t})_{t\ge0}\) is uncertain. Moreover, the upper bound of $E\left[\mathrm{BC}_{k:m}^2\right]$ is $O(N^{-2})$ in unbiased MCMC \cite{jacob2020}, indicating the bias correction term $\mathrm{BC}_{k:m}$ is not the dominant term for the variance of the estimator $F_{k:m}(\mathbf{X},\mathbf{Y})$. If we attempt to change the driving sequence of $\mathbf{Y}$, the gain may be minimal. Hence, we do not modify the sampling process of the chain $\mathbf{Y}$.
	
	We know that when replacing IID sequences in the MCMC sampling process with other sequences, it is preferable to use (W)CUD sequences, theoretically ensuring consistency \citep{chen2011,tribble2008}. In the aforementioned process of selecting the row number in $V_\mathbf{X}$, we utilize the concatenation of different array-(W)CUD sequences. This raises a question: Does the concatenated sequence retain the array-(W)CUD property? If not, this approach is risky. Theorem \ref{thm:cud} answers this question, establishing that the naturally concatenated sequence of two array-(W)CUD sequences remains an array-(W)CUD sequence. To prove Theorem~\ref{thm:cud}, we begin by presenting a useful lemma introduced in
	\citep{chentsov1967,tribble}.
	\begin{lemma}\label{lem:cud}
		The sequence $(v_i)_{i\ge 1}$ is CUD if and only if for arbitrary integers $1\le l\le d$, the \black{non-overlapping} sequence \black{$\{\bar{\bm{z}}_{i}\}$} defined by $\black{\bar{\bm{z}}_{i}} = (v_{id-l+1},v_{id-l+2},\ldots,v_{(i+1)d-l})$ satisfies
		$$
		\lim\limits_{n\to\infty}D_N^*(\black{\bar{\bm{z}}_{1},\bar{\bm{z}}_{2},\ldots,\bar{\bm{z}}_{N}}) = 0,~\text{as}~N\to\infty.
		$$  
		where $D_N^*$ is the star discrepancy defined in \eqref{eq:stardis}. An analogous equivalence holds for WCUD sequences.
	\end{lemma}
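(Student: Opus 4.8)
The plan is to prove the two implications separately, as they have rather different characters. Throughout I read the right-hand condition as holding for every block dimension $d\ge 1$ and every phase $1\le l\le d$, matching the ``for any $d\ge 1$'' in the CUD definition, and I write $e(x)=e^{2\pi\mathrm{i}x}$. I would work through Weyl's criterion in its discrepancy form: a point sequence in $[0,1]^d$ has star discrepancy tending to $0$ if and only if $\frac1N\sum_{i=1}^N e(\langle \bm h,\bm p_i\rangle)\to 0$ for every nonzero $\bm h\in\mathbb Z^d$.

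For the direction ``non-overlapping (all phases) $\Rightarrow$ CUD'', I would fix a target dimension $D$ and examine the overlapping blocks $\bm x_t=(v_t,\dots,v_{t+D-1})$, $t=1,\dots,N$, from the CUD definition. Partitioning the indices $t$ by their residue modulo $D$ splits $\{\bm x_t\}$ into $D$ subsequences; within a fixed residue class the blocks are consecutive and non-overlapping, so each class is exactly one of the phased non-overlapping sequences $\{\bar{\bm z}_i\}$ of dimension $D$, up to at most one truncated block at each end (a finite edge effect). By hypothesis each class has star discrepancy tending to $0$. Since the empirical measure of $\{\bm x_t\}_{t\le N}$ is the convex combination, with weights $N_r/N\to 1/D$, of the empirical measures of the $D$ classes, the triangle inequality gives $D_N^*(\bm x_1,\dots,\bm x_N)\le \max_r D_{N_r}^*+O(D/N)\to 0$. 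As $D$ was arbitrary, the sequence is CUD. This direction is essentially bookkeeping.

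The reverse direction, ``CUD $\Rightarrow$ non-overlapping (all phases)'', is the crux. Fix $d$, a phase $l$, and a nonzero $\bm h\in\mathbb Z^d$; I must show $\frac1N\sum_{i=1}^N e(\langle \bm h,\bar{\bm z}_i\rangle)\to 0$. Writing $b_t=e(\langle\bm h,(v_t,\dots,v_{t+d-1})\rangle)$, the sum I want selects the single residue class $t\equiv 1-l\pmod d$ of the full overlapping sum $\sum_t b_t$. The difficulty is precisely that subsampling a u.d. sequence to one residue class need not preserve uniform distribution, so dimension-$d$ information alone is insufficient. I would isolate the class using the characters of $\mathbb Z/d\mathbb Z$, which reduces the task to showing each twisted sum $\sum_{t\le M} e(jt/d)\,b_t$ is $o(M)$ for $1\le j\le d-1$. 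Expanding $\big|\sum_t e(jt/d)b_t\big|^2$ and grouping the double sum by the lag $r=t-t'$ turns each off-diagonal contribution into a Weyl sum $\frac1M\sum_t e\big(\langle\bm h,(v_t,\dots,v_{t+d-1})\rangle-\langle\bm h,(v_{t-r},\dots,v_{t-r+d-1})\rangle\big)$, whose exponent is a nonzero integer linear form in the coordinates of an overlapping block of dimension $d+|r|$ (it can vanish identically only if $\bm h=0$). By CUD in that higher dimension each such average tends to $0$; combined with the vanishing diagonal ($O(1/M)$) and a van der Corput/Cesàro control over the lags $r$, the twisted sums are $o(M)$, which is the claim.

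The main obstacle is this last step: it is where the full strength of CUD --- uniform distribution of overlapping blocks in \emph{all} dimensions, not merely dimension $d$ --- is genuinely needed, and where the lag correlations must be controlled uniformly so that the $O(M)$ off-diagonal terms, each individually $o(1)$, sum to $o(M)$. Once these Weyl-criterion statements are established, the WCUD case follows by replacing ``$\to 0$'' with convergence in probability and invoking the same identities termwise.
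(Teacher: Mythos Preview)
The paper does not prove this lemma at all: it is stated as a known result and attributed to Chentsov (1967) and Tribble's thesis, so there is no in-paper proof to compare against.

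That said, your proposal is essentially a correct proof. The easy direction (non-overlapping $\Rightarrow$ CUD) via partitioning the overlapping blocks into $D$ residue classes and averaging their discrepancies is exactly right. For the hard direction (CUD $\Rightarrow$ non-overlapping), your plan---extract the residue class with characters of $\mathbb Z/d\mathbb Z$, then kill each twisted sum $\sum_t e(jt/d)b_t$ by van der Corput differencing---works: the $r=0$ term contributes $O(M^2/H)$, each fixed $r\neq 0$ correlation is a genuine Weyl sum in dimension $d+|r|$ with a nonzero frequency vector (your argument that it vanishes only when $\bm h=0$ is correct, as the first $r$ coordinates already force $h_1=\dots=h_r=0$ and the middle relations propagate this), so CUD in that higher dimension gives $o(M)$; with $H$ fixed and $M\to\infty$ you get $\limsup M^{-2}|\cdot|^2\le 1/H$, then let $H\to\infty$. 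This is the classical route and matches how the cited references handle it. The only place to be slightly careful is the WCUD version: the van der Corput step squares the sum, so ``convergence in probability termwise'' needs a short argument (e.g.\ bound in $L^1$ or use subsequences), but nothing substantive changes.
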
	 
	
	Lemma \ref{lem:cud} illustrates that (W)CUD sequences exhibit good balance not only for overlapping blocks but also for non-overlapping blocks with any offset. The result of this lemma also holds for array-(W)CUD as shown in \citep{tribble}.
	\begin{theorem}\label{thm:cud}
		Suppose that $(v_{N,1},v_{N,2},\ldots,v_{N,N})$ and $(u_{M,1},u_{M,2},\ldots,u_{M,M})$ are two array-(W)CUD sequences, then $(w_{K,1},w_{K,2},\ldots,w_{K,K})$ is also an array-(W)CUD sequence with $K=N+M$ and $w_{K,i} = v_{N,i},w_{K,N+j} = u_{M,j},\ i=1,2,\ldots,N,\ j=1,2,\ldots,M$.
	\end{theorem}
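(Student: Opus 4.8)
The plan is to verify the defining condition \eqref{eq:arrcud} for the concatenated array $(w_{K,i})$ directly, by bounding the star discrepancy of its overlapping $d$-block point set in terms of the corresponding quantities for $(v_{N,i})$ and $(u_{M,i})$ plus a vanishing boundary contribution. Throughout I interpret the limit $K\to\infty$ as $N\to\infty$ through $\mathscr N$ and $M\to\infty$ through $\mathscr M$ jointly, with $K=N+M$, since the hypotheses only control each array separately. Lemma~\ref{lem:cud}, in its array form, is the tool that lets me pass freely between overlapping and non-overlapping blocks; I will use it to identify the relevant sub-collections of $w$-blocks with genuine block collections of $v$ and $u$ (and, if one prefers to reduce the number of boundary blocks to a single one, to recast the whole argument in terms of non-overlapping blocks).

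The heart of the argument is a decomposition of the $K-d+1$ overlapping $d$-blocks of $w$ into three groups according to where the window of length $d$ sits relative to the junction at index $N$. A window contained in the first $N$ coordinates (there are $N-d+1$ of these) is literally an overlapping $d$-block of $v$; a window contained in the last $M$ coordinates (there are $M-d+1$ of these) is literally an overlapping $d$-block of $u$; and the remaining $d-1$ windows straddle the junction. Writing the empirical distribution of the $w$-blocks as the convex combination of the three empirical distributions with weights $(N-d+1)/(K-d+1)$, $(M-d+1)/(K-d+1)$ and $(d-1)/(K-d+1)$, subtracting the volume $\prod_j z_j$ (which equals the same convex combination of copies of itself, since the three weights sum to one), and applying the triangle inequality followed by a supremum over test boxes gives
\begin{equation*}
D_{K-d+1}^*(w)\;\le\;\frac{N-d+1}{K-d+1}\,D_{N-d+1}^*(v)+\frac{M-d+1}{K-d+1}\,D_{M-d+1}^*(u)+\frac{d-1}{K-d+1},
\end{equation*}
where $D^*(\cdot)$ abbreviates the overlapping $d$-block discrepancy of the indicated array and the last term bounds the crude contribution of the $d-1$ straddling blocks.

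Since the two leading coefficients are at most $1$, the array-CUD hypotheses force the first two terms to tend to $0$ and the explicit $O(d/K)$ boundary term vanishes, which establishes \eqref{eq:arrcud} for $w$ and proves the CUD case. For the WCUD case the same inequality holds pathwise, so for any fixed $\epsilon>0$ and all $K$ large enough that $(d-1)/(K-d+1)<\epsilon/3$, the event $\{D_{K-d+1}^*(w)>\epsilon\}$ is contained in $\{D_{N-d+1}^*(v)>\epsilon/3\}\cup\{D_{M-d+1}^*(u)>\epsilon/3\}$; a union bound then drives the probability to $0$ without any independence assumption between the two arrays. I expect the main obstacle to be the careful bookkeeping at the junction: making precise that the pure-$v$ and pure-$u$ windows coincide \emph{exactly} with block collections to which the array-CUD property applies, and controlling the straddling blocks uniformly. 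This bookkeeping is cleanest in the non-overlapping formulation supplied by Lemma~\ref{lem:cud}, where only a single block can cross the junction, at the cost of tracking the offset induced on the $u$-part by the position of that block.
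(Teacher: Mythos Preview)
Your argument is correct. Both proofs rest on the same convex-combination bound for the star discrepancy of a partitioned point set, but you apply it directly to the \emph{overlapping} $d$-blocks of $w$, whereas the paper first passes through Lemma~\ref{lem:cud} to the \emph{non-overlapping} $d$-blocks. Your route is in fact more economical: the $N-d+1$ pure-$v$ windows and the $M-d+1$ pure-$u$ windows coincide \emph{exactly} with the overlapping block collections appearing in the defining condition~\eqref{eq:arrcud}, so the array-CUD hypotheses apply without any reformulation, and the $d-1$ straddling windows contribute at most $(d-1)/(K-d+1)$. The paper's non-overlapping detour buys a single boundary block instead of $d-1$, but at the price of tracking the offset $l=N\bmod d$ on the $u$-part and invoking Lemma~\ref{lem:cud} twice (once to reduce, once to lift back). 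Since $d$ is fixed, your $d-1$ boundary blocks are just as negligible, and you never need the lemma; the remark at the end of your proposal that the non-overlapping version ``is cleanest'' actually undersells your own argument. Your explicit treatment of the WCUD case via the union bound is also more complete than the paper's, which only asserts that the WCUD proof is ``similar''.
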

	\begin{proof}
		We only consider the array-CUD sequence, and the proof of the array-WCUD sequence is similar. By the definition of array-CUD sequences given in \eqref{eq:arrcud}, for any $d \ge 1$, as $N\to \infty$ through the values in an infinite positive integers set $\mathscr{N}$, we have 
		\begin{equation*}
			D_{N-d+1}^*\left((v_{N,1},\ldots,v_{N,d}),(v_{N,2},\ldots,v_{N,d+1}), \ldots, (v_{N,N-d+1},\ldots,v_{N,N})\right)\to 0.
		\end{equation*}
		By Lemma \ref{lem:cud} with $l=d$, it is equivalent to 
		\begin{equation}\label{eq:DN}
			D_{N'}^*\left(\black{\bar{\bm{x}}_{1},\bar{\bm{x}}_{2},\ldots,\bar{\bm{x}}_{{N'}}}\right)\to 0,~\text{as}~N'\to\infty,
		\end{equation}
		where $\black{\bar{\bm{x}}_{i}} = (v_{N,(i-1)d+1},v_{N,(i-1)d+2},\ldots,v_{N,id}),\ i=1,2,\ldots,N'$ and $N' = \lfloor N/d \rfloor$. Similarly, by Lemma \ref{lem:cud} with $l= N \mod d$, we also have
		\begin{equation}\label{eq:DM}
			D_{M'}^*\left(\black{\bar{\bm{y}}_{1},\bar{\bm{y}}_{2},\ldots,\bar{\bm{y}}_{{M'}}}\right)\to 0,~\text{as}~M'\to\infty,
		\end{equation}
		where $\black{\bar{\bm{y}}_{j}} = (u_{M,jd-l+1},u_{M,jd-l+2},\ldots,u_{M,jd-l+d}),\ j=1,2,\ldots,M'$ and $M' = \lfloor (M-d+l)/d \rfloor$.
		Let $\black{\bm{\bar z}} = (v_{N,N'd+1},v_{N,N'd+2},\ldots,v_{N,N},u_{M,1},u_{M,2},\ldots,u_{M,d-l})$, and $K' = M'+N'+1$. By the definition of star discrepancy given in \eqref{eq:stardis} and absolute value inequality, we obtain that
		\begin{equation}\label{eq:dis}
			\begin{aligned}
				&D_{K'}^*\left(\black{\bar{\bm{x}}_{1},\bar{\bm{x}}_{2},\ldots,\bar{\bm{x}}_{{N'}}, \bm{\bar z}, \bar{\bm{y}}_{1},\bar{\bm{y}}_{2},\ldots,\bar{\bm{y}}_{{M'}}}\right)\\
				\le&\frac{N'}{K'}D_{N'}^*\left(\black{\bar{\bm{x}}_{1},\bar{\bm{x}}_{2},\ldots,\bar{\bm{x}}_{{N'}}}\right)+\frac{M'}{K'}D_{M'}^*\left(\black{\bar{\bm{y}}_{1},\bar{\bm{y}}_{2},\ldots,\bar{\bm{y}}_{{M'}}}\right) + \frac{1}{K'}D_1^*(\black{\bm{\bar z}}).\textbf{}
			\end{aligned}
		\end{equation}
		Hence, by \eqref{eq:DN} and \eqref{eq:DM}, it follows that $\text{as}~N'\to \infty,M'\to\infty,$ i.e., $K'\to\infty$, $$D_{K'}^*\left(\black{\bar{\bm{x}}_{1},\bar{\bm{x}}_{2},\ldots,\bar{\bm{x}}_{{N'}}, \bm{\bar z}, \bar{\bm{y}}_{1},\bar{\bm{y}}_{2},\ldots,\bar{\bm{y}}_{{M'}}}\right) \to 0.$$ Then as $K\to\infty$, 
		\begin{align*}
			D_{K-d+1}^*\left((w_{K,1},\ldots,w_{K,d}),(w_{K,2},\ldots,w_{K,d+1}), \ldots, (w_{K,K-d+1},\ldots,w_{K,K})\right)\to 0,
		\end{align*}
		with $K=N+M$ and $w_{K,i} = v_{N,i},w_{K,N+j} = u_{M,j},\ i=1,2,\ldots,N,\ j=1,2,\ldots,M$. Therefore, $(w_{K,1},w_{K,2},\ldots,\ldots,w_{K,K})$ is an array-CUD sequence. 
	\end{proof}
	
	Hence, the sequences generated by CUD-IID method remain array-(W)CUD sequences. Therefore, using the CUD-IID method in unbiased MCQMC still preserves consistency.

\section{Convergence analysis}\label{sec:theory}
In this section, we prove the unbiasedness of the proposed estimator and discuss its variance.
We follow the assumptions in \citep{jacob2020} for verifying unbiasedness and bounding variance of our proposed estimators.
\begin{assumption}\label{ass1}
	As $t \rightarrow \infty,\  \mathbb{E}\left[f\left(\black{{\bm{X}_t}}\right)\right] \rightarrow \mathbb{E}_\pi[f(\black{\bm{X}})]$ for a real-valued function $f$. Furthermore, there are $\eta>0$ and $D<\infty$ such that $\mathbb{E}\left[\left|f\left(\black{{\bm{X}_t}}\right)\right|^{2+\eta}\right] \leq D$ for all $t \geq 0$.
\end{assumption}
\begin{assumption}\label{ass2}
	The two chains \((\black{{\bm{X}_t}})_{t\ge0}\) and \((\black{{\bm{Y}_t}})_{t\ge0}\) are such that there exists a random variable (the first meeting time) $$\tau=\inf \left\{t \geq 1: \black{{\bm{X}_t}}=\black{{\bm{Y}_{t-1}}}\right\}.$$ Moreover, the meeting time satisfies $\mathbb{P}(\tau>t) \leq C \delta^t$ for all $t \geq 0$, some constants $C<\infty$ and $\delta \in(0,1)$.
\end{assumption}
\begin{assumption}\label{ass3}
	The two chains \((\black{{\bm{X}_t}})_{t\ge0}\) and \((\black{{\bm{Y}_t}})_{t\ge0}\) stay together after the meeting time $\tau$, i.e., $\black{{\bm{X}_t}}=\black{{\bm{Y}_{t-1}}}$ for all $t \geq \tau$.
\end{assumption}

\begin{proposition}\label{pro:ub}
	Under Assumptions \ref{ass1}-\ref{ass3} for all $m\ge k\ge0$, the estimator $F_{k:m}(\black{\mathbf{X}},\black{\mathbf{Y}})$ defined in \eqref{eq:Fkm}, after replacing the original IID sequences with the randomized array-(W)CUD sequences, is an unbiased estimator for $E_{\pi}[f(\black{\bm{X}})]$, and has a finite expected computing time as well as a finite variance. 
\end{proposition}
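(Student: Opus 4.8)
The plan is to treat the three assertions --- unbiasedness, finite expected computing time, and finite variance --- separately, following the scheme of Jacob et al.\ \cite{jacob2020} but discarding the Markov property, which no longer holds once the chain $\mathbf{X}$ is driven by an array-(W)CUD sequence. Throughout I would lean on the single structural property that survives randomization: each block $\bm{V}_i$ of the randomized array-(W)CUD driving sequence is marginally distributed as $\mathbf{U}(0,1)^d$ and is independent of the initial draws $\bm{X}_0,\bm{Y}_0$.

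For unbiasedness, since $F_{k:m}$ is the average of the $F_l$ for $l=k,\ldots,m$, it suffices to show $E[F_l]=E_\pi[f(\bm{X})]$ for each such $l$, and the engine is the telescoping identity behind \eqref{eq:ub}. The key non-trivial input is the per-step distributional match $E[f(\bm{X}_t)]=E[f(\bm{Y}_t)]$ for every $t\ge0$. I would first try to verify this identity directly: using the marginal uniformity of each block together with the structure of the maximal-coupling kernel that produces $\bm{Y}_t$ from $(\bm{X}_t,\bm{Y}_{t-1})$, I would argue that at the level of expectations a correctly distributed current state is carried to a correctly distributed next state --- the role played by the Markov argument in \cite{jacob2020}. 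Granting this identity, I would justify exchanging $E[\cdot]$ with the infinite sum obtained by extending $\sum_{l=k+1}^{\tau-1}$ to $\sum_{l=k+1}^{\infty}$ (the added terms vanish by Assumption \ref{ass3}, since $\bm{X}_l=\bm{Y}_{l-1}$ for $l\ge\tau$); the geometric tail $\mathbb{P}(\tau>t)\le C\delta^t$ of Assumption \ref{ass2} makes the tail $L^1$-summable, while the uniform $(2+\eta)$-moment bound of Assumption \ref{ass1} supplies the uniform integrability needed for the interchange. The telescoping then collapses the sum to $\lim_{t\to\infty}E[f(\bm{X}_t)]-E[f(\bm{X}_k)]$, and the convergence in Assumption \ref{ass1} identifies the limit as $E_\pi[f(\bm{X})]$.

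The remaining two claims are lighter. The cost of one run is proportional to $\max(m,\tau)$, so $E[\max(m,\tau)]\le m+E[\tau]<\infty$ because Assumption \ref{ass2} gives $E[\tau]=\sum_{t\ge0}\mathbb{P}(\tau>t)\le\sum_{t\ge0}C\delta^t<\infty$. For finite variance I would use the decomposition $F_{k:m}=\mathrm{MCMC}_{k:m}+\mathrm{BC}_{k:m}$ from \eqref{eq:Fkm} and bound $\|F_{k:m}\|_2\le\|\mathrm{MCMC}_{k:m}\|_2+\|\mathrm{BC}_{k:m}\|_2$ by Minkowski's inequality. The first term is a finite average of $f(\bm{X}_l)$ and is controlled by the $(2+\eta)$-moment bound; for $\mathrm{BC}_{k:m}$, whose number of nonzero summands is the random quantity $\tau$, I would apply Hölder's (or Cauchy--Schwarz) inequality to separate the $(2+\eta)$-moment bound on the increments $f(\bm{X}_l)-f(\bm{Y}_{l-1})$ from the geometric tail that controls how many increments survive, exactly as in \cite{jacob2020}.

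The main obstacle is the identity $E[f(\bm{X}_t)]=E[f(\bm{Y}_t)]$. With IID driving it is immediate, since both chains are genuine Markov chains with kernel $P$ and a common initial law; with array-(W)CUD driving the chain $\mathbf{X}$ is no longer Markov and its increments are correlated across time through the shared driving sequence, so the equality must be extracted solely from the marginal uniformity of the individual randomized blocks and from the way the coupling kernel consumes them. Controlling this dependence --- both between the two chains and across time --- is the delicate point, and it is precisely the part of the argument that departs from \cite{jacob2020} and carries the paper's ``without the Markov property'' contribution.
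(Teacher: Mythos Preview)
Your plan is aligned with the paper's argument, but the paper is far more compressed and differs from you on two points.

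First, the paper does not treat the identity $E[f(\bm{X}_t)]=E[f(\bm{Y}_t)]$ as an obstacle to be worked out. In the paragraph preceding the proof it simply observes that after randomization each row $\bm{u}_i$ of the driving matrix is marginally $\mathbf{U}(0,1)^d$ and asserts that this leaves the marginal law of every $\bm{X}_t$ unchanged; the formal proof then just remarks that Jacob et al.'s Proposition~1 argument nowhere uses the Markov property and imports it wholesale, invoking \eqref{eq:ub} together with Assumption~\ref{ass1} at the end. Your planned step-by-step induction (``correctly distributed current state is carried to a correctly distributed next state'') would in fact need $\bm{V}_{t+1}$ to be independent of $\bm{X}_t$, which is exactly the Markov ingredient you are discarding; the paper does not attempt any such verification and offers only the one-line assertion based on marginal uniformity, so you should not expect to find a deeper justification there.

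Second, for finite variance the paper follows Jacob et al.\ directly rather than your $\mathrm{MCMC}_{k:m}+\mathrm{BC}_{k:m}$ decomposition: set $\Delta_0=f(\bm{X}_0)$, $\Delta_t=f(\bm{X}_t)-f(\bm{Y}_{t-1})$ for $t\ge1$, form the partial sums $F_0^n=\sum_{t=0}^n\Delta_t$, show $\{F_0^n\}_{n\ge0}$ is Cauchy in $L_2$ (using Assumptions~\ref{ass1}--\ref{ass2}), and conclude $F_0\in L_2$; almost-sure convergence $F_0^n\to F_0$ comes from Assumption~\ref{ass3}. Both routes work and rely on the same H\"older-type separation of the $(2+\eta)$-moment from the geometric tail of $\tau$, but the Cauchy-sequence packaging handles the random number of nonzero increments in one stroke and is what the paper actually records. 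Your expected-cost argument via $E[\max(m,\tau)]\le m+E[\tau]<\infty$ matches the paper's.
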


Following \cite{chen, harase2021, harase2024,tribble}, we randomize array-(W)CUD sequences. Let $\bm{u}_i$ be the $i$-th row of the variable matrix. After randomization, the marginal distributions of \(\{\bm{u}_i\}_{i=1}^{m}\) are all \(\black{\mathbf{U}}(0,1)^d\). Therefore, replacing the original IID sequences with the randomized array-(W)CUD sequences for each state \(\black{{\bm{X}_t}}\) does not alter their marginal distribution and properties. Here, we briefly review the proof of Proposition 1 in \citep{jacob2020} which does not rely on the Markov property. By following their proof, we obtain the results of Proposition \ref{pro:ub}. 

\begin{proof}[Proof of Proposition \ref{pro:ub}]
	For convenience, we just consider \(F_0(\black{\mathbf{X}},\black{\mathbf{Y}})\). The results for \(F_k(\black{\mathbf{X}},\black{\mathbf{Y}})\) and \(F_{k:m}(\black{\mathbf{X}},\black{\mathbf{Y}})\) are similar. Define \(\Delta_0=f(\black{\bm{X}}_0), \Delta_t=f(\black{{\bm{X}_t}})-f(\black{{\bm{Y}_{t-1}}}), t\ge1\), and \(F_0^n(\black{\mathbf{X}},\black{\mathbf{Y}}) = \sum_{t=0}^{n}{\Delta_t}.\) By Assumption \ref{ass2}, it follows that \(E[\tau] < \infty\), indicating that the expected computation time for \(F_0(\black{\mathbf{X}},\black{\mathbf{Y}})\) is finite. Combined with Assumption \ref{ass3}, as \(n \to \infty\), \(F_0^n(\black{\mathbf{X}},\black{\mathbf{Y}}) \to F_0(\black{\mathbf{X}},\black{\mathbf{Y}})\) almost surely. Denote \(L_2\) as the complete space of random variables with finite second moments. By proving that \(\{F_0^n(\black{\mathbf{X}},\black{\mathbf{Y}})\}_{n\ge0}\) is actually a Cauchy sequence on \(L_2\), it is shown that \(F_0(\black{\mathbf{X}},\black{\mathbf{Y}})\in L_2\). Therefore, \(F_0(\black{\mathbf{X}},\black{\mathbf{Y}})\) has a finite expected value and variance. Combined with \eqref{eq:ub} and Assumption \ref{ass1}, it is deduced that \(E[F_0(\black{\mathbf{X}},\black{\mathbf{Y}})]=E_\pi[f(\black{\bm{X}})]\).  This completes the proof.
\end{proof}  

Next, we consider the variance of $F_{k:m}(\black{\mathbf{X}},\black{\mathbf{Y}})$ in unbiased MCQMC. Following \citep{jacob2020}, the variance of $F_{k:m}(\black{\mathbf{X}},\black{\mathbf{Y}})$ can be bounded via
\begin{shrinkeq}{-1ex}
	\begin{align}\label{eq:varf}
		V\left[F_{k:m}(\black{\mathbf{X}},\black{\mathbf{Y}})\right]\leq\text{MSE}_{k:m}+2\sqrt{\text{MSE}_{k:m}}\sqrt{E[\text{BC}_{k:m}^2]}+E[\text{BC}_{k:m}^2],
	\end{align}
\end{shrinkeq}
where $\text{MSE}_{k:m} = E[(\text{MCMC}_{k:m}-E_\pi[f(\black{\bm{X}})])^2]$. Jacob et al. \cite{jacob2020} in Proposition 3 introduced a geometric drift condition on the Markov kernel $P$ and constructed martingales by using the Markov property, consequently providing an upper bound for $E[\text{BC}_{k:m}^2]$. \black{Unfortunately, the chain $({\bm{X}_t})_{t\ge0}$ no longer possesses the Markov property in unbiased MCQMC.}
Therefore, we cannot apply the results in \citep{jacob2020} to derive an upper bound for $E[\text{BC}_{k:m}^2]$. 
We next provide a new upper bound for $E[\text{BC}_{k:m}^2]$ under Assumptions \ref{ass1}-\ref{ass3}.
\begin{proposition}\label{pro:BC}
	Under Assumptions \ref{ass1}-\ref{ass3}, for all $k\ge0$, the sample size $N>0$, and $m= N+k-1$, we have
	\begin{shrinkeq}{-1ex}
		\begin{equation}\label{eq:bc}
			E\left[\mathrm{BC}_{k:m}^2\right] \le C_\delta\frac{\hat{\delta}^{2k}}{N^2},
		\end{equation}
	\end{shrinkeq}
	where 
	\begin{shrinkeq}{-0.5ex}
		$$C_{\delta} = 4D^{\frac{2}{2+\eta}}C^{\frac{\eta}{2+\eta}}\frac{3+2\left(\left(-\frac{\hat{\delta}^{(-1/\ln(\hat{\delta}))}}{\ln(\hat{\delta})}\right)\frac{1-\hat{\delta}}{\hat{\delta}}\right)^2}{\left(\frac{(1-\hat{\delta})^2}{\hat{\delta}}\right)^2},$$
	\end{shrinkeq}
	$\hat{\delta} = \delta^{\frac{\eta}{4+2\eta}}$, $D$ and $\eta$ are defined in Assumption \ref{ass1}, and $C$ and $\delta$ are defined in Assumption~\ref{ass2}.
\end{proposition}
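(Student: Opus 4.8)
The plan is to bound $\mathrm{BC}_{k:m}$ in $L^2$ via the triangle inequality, thereby sidestepping the Markov property entirely (this is exactly the step of Proposition~3 in \citep{jacob2020} that fails here), and then to estimate the resulting deterministic weighted geometric series. Write $\Delta_l = f(\bm{X}_l) - f(\bm{Y}_{l-1})$ and $w_l = \min(1,(l-k)/N)$ with $N = m-k+1$, so that $\mathrm{BC}_{k:m} = \sum_{l=k+1}^{\tau-1} w_l \Delta_l$. By Assumption~\ref{ass3}, $\Delta_l = 0$ whenever $l \ge \tau$, so the meeting time enters only through the truncation and I may freely write $\Delta_l = \Delta_l \mathbf{1}\{\tau > l\}$.

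First I would bound each increment in $L^2$. Since $E[\Delta_l^2] = E[\Delta_l^2 \mathbf{1}\{\tau>l\}]$, H\"older's inequality with conjugate exponents $(2+\eta)/2$ and $(2+\eta)/\eta$ gives $E[\Delta_l^2] \le (E[|\Delta_l|^{2+\eta}])^{2/(2+\eta)}(\mathbb{P}(\tau>l))^{\eta/(2+\eta)}$. The first factor is controlled by Minkowski's inequality together with Assumption~\ref{ass1}: because the randomization preserves the marginal law of each state (so $\bm{Y}_{l-1}$ shares the law of $\bm{X}_{l-1}$ and both have $(2+\eta)$-moment at most $D$), one gets $(E[|\Delta_l|^{2+\eta}])^{1/(2+\eta)} \le 2D^{1/(2+\eta)}$. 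The second factor is controlled by the geometric tail $\mathbb{P}(\tau>l)\le C\delta^l$ of Assumption~\ref{ass2}. Collecting these yields $\|\Delta_l\|_2 \le 2 D^{1/(2+\eta)} C^{\eta/(2(2+\eta))} \hat\delta^{\,l}$, which is precisely where the definition $\hat\delta = \delta^{\eta/(4+2\eta)}$ comes from.

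Next I would apply the triangle inequality in $L^2$, namely $\|\mathrm{BC}_{k:m}\|_2 \le \sum_{l=k+1}^{\infty} w_l \|\Delta_l\|_2$, reducing the problem to estimating $\sum_l w_l \hat\delta^{\,l}$. I would split this sum at $l=m$: for $k<l\le m$ the weight is $(l-k)/N$ and, after substituting $j=l-k$ and using $\sum_{j\ge1} j\hat\delta^{\,j} = \hat\delta/(1-\hat\delta)^2$, this head contributes a term $\bar T_1$ proportional to $\hat\delta^{\,k}/N$; for $l>m$ the weight is $1$ and the tail is the geometric sum $\hat\delta^{\,m+1}/(1-\hat\delta) = \hat\delta^{\,N+k}/(1-\hat\delta)$. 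The key device is then the elementary inequality $\hat\delta^{\,N} \le \tfrac1N \max_{x\ge0} x\hat\delta^{\,x} = \tfrac1N\bigl(-\hat\delta^{-1/\ln\hat\delta}/\ln\hat\delta\bigr)$, which converts the exponentially small tail into an $O(1/N)$ quantity $\bar T_2$ uniform in $N$; this is exactly the origin of the factor $\bigl(-\hat\delta^{-1/\ln\hat\delta}/\ln\hat\delta\bigr)$ in $C_\delta$. Finally, squaring and combining $\bar T_1$ and $\bar T_2$ through elementary inequalities (e.g. $2\bar T_1\bar T_2 \le 2\bar T_1^2 + \bar T_2^2$) produces the stated constant $C_\delta$ with its $3 + 2(\cdots)^2$ numerator and its $\hat\delta^{2k}/N^2$ decay.

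The main obstacle is the non-Markovian tail $l > m$: every naive bound there is merely geometric in $N$, whereas the whole point of the claimed rate is that $\mathrm{BC}_{k:m}$ decays like $N^{-1}$ in $L^2$ rather than exponentially. Handling this cleanly and uniformly in $N$ forces the $\max_{x\ge0} x\hat\delta^{\,x}$ estimate, and keeping the constants honest through the H\"older step, the triangle inequality, and the final squaring is the only delicate bookkeeping. Conceptually the crucial move is that sub-additivity of the $L^2$ norm replaces the martingale and orthogonality argument of \citep{jacob2020}, so that the absence of the Markov property costs nothing beyond a looser constant.
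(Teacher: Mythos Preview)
Your proposal is correct and follows essentially the same route as the paper: the paper also bounds $\|\Delta_l\|_2$ via the identical H\"older argument (Lemma~\ref{lem:bc1} invokes it from \citep{jacob2020}), separates the weighted sum into the head $l\le m$ and the tail $l>m$, and uses the same $\max_{x\ge0}x\hat\delta^{\,x}$ device to control the tail uniformly in $N$. The only cosmetic difference is that the paper first partitions on the event $\{\tau\le m+1\}$ into three pieces $\mathrm{I},\mathrm{II}_1,\mathrm{II}_2$ and then applies Cauchy--Schwarz to cross terms (which is exactly your triangle inequality in $L^2$), whereas you apply the triangle inequality once and split the resulting deterministic series afterwards; both arrangements yield the same $3+2(\cdots)^2$ constant.
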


To prove Proposition \ref{pro:BC}, we require a useful lemma as follows.
\begin{lemma}\label{lem:bc1}
	Under Assumptions \ref{ass1}-\ref{ass3}, for all $k\ge0$ and sample size $N>0$, let $m= N+k-1$, then we have
	\begin{equation}\label{eq:bc1}
		E\left[\left(\sum_{l=k+1}^{\infty}\frac{l-k}{m-k+1}\left|f(\black{{\bm{X}_l}})-f(\black{{\bm{Y}_{l-1}}})\right|\right)^2\right] \le \tilde{C}\left(\frac{\hat{\delta}^{k+1}}{N(1-\hat{\delta})^2}\right)^2,
	\end{equation}
	and
	\begin{equation}\label{eq:bc2}
		E\left[\left(\sum_{l=k+1}^{\infty}\left|f(\black{{\bm{X}_l}})-f(\black{{\bm{Y}_{l-1}}})\right|\right)^2\right] \le \tilde{C}\left(\frac{\hat{\delta}^{k+1}}{1-\hat{\delta}}\right)^2,
	\end{equation}
	where $\tilde{C} = 4D^{\frac{2}{2+\eta}}C^{\frac{\eta}{2+\eta}}$, $\hat{\delta} = \delta^{\frac{\eta}{4+2\eta}}$ with $D$ and $\eta$ defined in Assumption \ref{ass1} and $C$ and $\delta$ defined in Assumption \ref{ass2}.
\end{lemma}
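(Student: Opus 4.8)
The plan is to reduce both bounds to a single per-term estimate $E[\Delta_l^2] \le \tilde{C}\hat{\delta}^{2l}$, where $\Delta_l := f(\bm{X}_l) - f(\bm{Y}_{l-1})$ as in the proof of Proposition \ref{pro:ub}, and then sum a geometric (respectively arithmetico-geometric) series. First I would observe that by Assumption \ref{ass3} the increment $\Delta_l$ vanishes for $l \ge \tau$, so $\Delta_l = \Delta_l\, 1_{\{\tau > l\}}$ and every infinite sum is in fact finite almost surely. Since the summands $|\Delta_l|$ are nonnegative, Tonelli's theorem lets me expand each squared sum as a double sum and interchange expectation with summation without any integrability concern: $E[(\sum_l c_l|\Delta_l|)^2] = \sum_{l,l'} c_l c_{l'} E[|\Delta_l||\Delta_{l'}|]$. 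Applying the Cauchy--Schwarz inequality to each cross term, $E[|\Delta_l||\Delta_{l'}|] \le \sqrt{E[\Delta_l^2]}\sqrt{E[\Delta_{l'}^2]}$, collapses the double sum back into a single sum, so that $E[(\sum_l c_l|\Delta_l|)^2] \le (\sum_l c_l \sqrt{E[\Delta_l^2]})^2$.

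The heart of the argument is the per-term bound. Writing $\Delta_l^2 = \Delta_l^2\, 1_{\{\tau > l\}}$ and applying H\"older's inequality with the conjugate exponents $p = (2+\eta)/2$ and $q = (2+\eta)/\eta$ yields $E[\Delta_l^2] \le (E[|\Delta_l|^{2+\eta}])^{2/(2+\eta)} (P(\tau > l))^{\eta/(2+\eta)}$. For the first factor I would use Minkowski's inequality together with Assumption \ref{ass1}; crucially, since $\bm{X}_t$ and $\bm{Y}_t$ are identically distributed for every $t$ (and the randomized array-(W)CUD sequences preserve these marginals, as noted before Proposition \ref{pro:ub}), the moment bound $E[|f(\bm{Y}_{l-1})|^{2+\eta}] \le D$ also holds, giving $E[|\Delta_l|^{2+\eta}] \le (2D^{1/(2+\eta)})^{2+\eta} = 2^{2+\eta}D$ and hence $(E[|\Delta_l|^{2+\eta}])^{2/(2+\eta)} \le 4 D^{2/(2+\eta)}$. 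For the second factor I would invoke the geometric tail $P(\tau > l) \le C\delta^l$ of Assumption \ref{ass2}, so that $(P(\tau > l))^{\eta/(2+\eta)} \le C^{\eta/(2+\eta)}\delta^{l\eta/(2+\eta)}$. Recognizing that $\hat{\delta}^2 = \delta^{\eta/(2+\eta)}$ by the definition $\hat{\delta} = \delta^{\eta/(4+2\eta)}$, this factor equals $C^{\eta/(2+\eta)}\hat{\delta}^{2l}$. Multiplying gives exactly $E[\Delta_l^2] \le \tilde{C}\hat{\delta}^{2l}$ with $\tilde{C} = 4D^{2/(2+\eta)}C^{\eta/(2+\eta)}$, hence $\sqrt{E[\Delta_l^2]} \le \sqrt{\tilde{C}}\,\hat{\delta}^l$.

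With this estimate in hand the two bounds follow by evaluating the resulting series. For \eqref{eq:bc2} the coefficients are $c_l = 1$, and I would use $\sum_{l=k+1}^\infty \hat{\delta}^l = \hat{\delta}^{k+1}/(1-\hat{\delta})$, squaring to obtain $\tilde{C}(\hat{\delta}^{k+1}/(1-\hat{\delta}))^2$. For \eqref{eq:bc1}, recalling that $m-k+1 = N$, the coefficients are $c_l = (l-k)/N$; factoring out $1/N$ and substituting $j = l-k$, the arithmetico-geometric identity $\sum_{j=1}^\infty j\hat{\delta}^{j+k} = \hat{\delta}^{k+1}/(1-\hat{\delta})^2$ gives $\sum_{l} c_l\sqrt{E[\Delta_l^2]} \le \sqrt{\tilde{C}}\,\hat{\delta}^{k+1}/(N(1-\hat{\delta})^2)$, and squaring produces the claimed bound. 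I expect the main obstacle to be the per-term step: it is where Assumptions \ref{ass1} and \ref{ass2} must be combined through the correct choice of H\"older exponents, and where one must be careful to transfer the moment control from the $\bm{X}$-chain to the $\bm{Y}$-chain; once $E[\Delta_l^2] \le \tilde{C}\hat{\delta}^{2l}$ is established, the remaining series manipulations are routine.
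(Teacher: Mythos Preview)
Your proposal is correct and follows essentially the same route as the paper: expand the square into a double sum, apply Cauchy--Schwarz to reduce to the per-term bound $E[\Delta_l^2]\le\tilde{C}\hat{\delta}^{2l}$, and then sum the (arithmetico-)geometric series. The only difference is that the paper cites the proof of Proposition~1 in \cite{jacob2020} for the per-term estimate, whereas you spell out the H\"older/Minkowski argument explicitly; your write-up is in fact more self-contained.
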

\begin{proof}
	Denote $\Delta_t = f(\black{{\bm{X}_{t}}})-f(\black{{\bm{Y}_{t-1}}}),t\ge1$. We have
	\begin{align*}
		E\left[\left(\sum_{l=k+1}^{\infty}\frac{l-k}{m-k+1}\left|f(\black{{\bm{X}_l}})-f(\black{{\bm{Y}_{l-1}}})\right|\right)^2\right] &= E\left[\left(\sum_{h=1}^{\infty}\frac{h}{N}\left|\Delta_{h+k}\right|\right)^2\right] \\&=
		\frac{1}{N^2}\sum_{i=1}^{\infty}\sum_{j=1}^{\infty}ijE\left[\left|\Delta_{i+k}\right|\cdot\left|\Delta_{j+k}\right|\right].
	\end{align*}
	By the proof of Proposition 1 in \citep{jacob2020}, we have 
	\begin{align*}
		E\left[\left|\Delta_{i+k}\right|\cdot\left|\Delta_{j+k}\right|\right] \le \left(E\left[\left|\Delta_{i+k}\right|^2\right]\cdot E\left[\left|\Delta_{j+k}\right|^2\right]\right)^{1/2} \le \left(\tilde{C}^2\hat{\delta}^{2(k+i)}\hat{\delta}^{2(k+j)}\right)^{1/2},
	\end{align*}
	where $\tilde{C} = 4D^{\frac{2}{2+\eta}}C^{\frac{\eta}{2+\eta}}$ and $\hat{\delta} = \delta^{\frac{\eta}{4+2\eta}}$.
	Hence, it follows that
	\begin{align*}
		&E\left[\left(\sum_{h=1}^{\infty}\frac{h}{N}\left|\Delta_{h+k}\right|\right)^2\right]
		\le \frac{1}{N^2}\sum_{i=1}^{\infty}\sum_{j=1}^{\infty}ij \left(\tilde{C}^2\hat{\delta}^{2(k+i)}\hat{\delta}^{2(k+j)}\right)^{1/2}
		\le \tilde{C}\left(\frac{\hat{\delta}^{k+1}}{N(1-\hat{\delta})^2}\right)^2.
	\end{align*}
	Similarly,  we can  obtain the result of \eqref{eq:bc2}.
	This completes the proof.
\end{proof}

Now we are ready to prove Proposition \ref{pro:BC}.
\begin{proof}[Proof of Proposition \ref{pro:BC}]
	Denote $\Delta_t = f(\black{{\bm{X}_{t}}})-f(\black{{\bm{Y}_{t-1}}}),t\ge1$. By $F_{k:m}(\black{\mathbf{X}},\black{\mathbf{Y}})$ defined in \eqref{eq:Fkm}, we know that
	\begin{align*}
		\text{BC}_{k:m}^2&= \sum_{l=k+1}^{\tau-1}\min\left(1,\frac{l-k}{m-k+1}\right)\Delta_l
		:= \begin{cases}
			\Rmnum{1}, \text{~if~} \tau \le m+1,\\
			\Rmnum{2}_1+\Rmnum{2}_2, \text{~if~} \tau > m+1,\\
		\end{cases}
	\end{align*}
	where $\Rmnum{1} = \sum_{l=k+1}^{\tau-1}\frac{l-k}{m-k+1}\Delta_l$, $\Rmnum{2}_1 = \sum_{l=k+1}^{m}\frac{l-k}{m-k+1}\Delta_l$ and 
		$\Rmnum{2}_2 = \sum_{l=m+1}^{\tau-1}\Delta_l.$
	Note that 
	\begin{equation}\label{eq:0}
		\begin{aligned}
			E\left[\text{BC}_{k:m}^2\right]
			&= E\left[\text{BC}_{k:m}^21_{\{\tau\le m+1\}}\right]
			+ E\left[\text{BC}_{k:m}^21_{\{\tau > m+1\}}\right]\\
			&= E\left[\Rmnum{1}^21_{\{\tau\le m+1\}}\right]
			+ E\left[\left(\Rmnum{2}_1+\Rmnum{2}_2\right)^21_{\{\tau > m+1\}}\right]\\
			&\le E\left[\Rmnum{1}^2\right]+E\left[\left(\Rmnum{2}_1+\Rmnum{2}_2\right)^2\right]\\
			& \le E\left[\Rmnum{1}^2\right]+2\left(E\left[\Rmnum{2}_1^2\right]+E\left[\Rmnum{2}_2^2\right]\right),
		\end{aligned}
	\end{equation}
	where $1_{\{\tau\le m+1\}}$ is a \black{indicator} function.
	By Lemma \ref{lem:bc1}, we have
	\begin{shrinkeq}{-0.5ex}
		\begin{align}\label{eq:1}
			E\left[\Rmnum{1}^2\right] \le E\left[\left(\sum_{l=k+1}^{\infty}\frac{l-k}{m-k+1}\left|\Delta_l\right|\right)^2\right] \le  \tilde{C}\left(\frac{\hat{\delta}^{k+1}}{N(1-\hat{\delta})^2}\right)^2.
		\end{align}
	\end{shrinkeq}
	Similarly, we find that
	\begin{shrinkeq}{-0.5ex}
		\begin{align}\label{eq:2}
			E\left[\Rmnum{2}_1^2\right]
			&\le E\left[\left(\sum_{l=k+1}^{\infty}\frac{l-k}{m-k+1}\left|\Delta_l\right|\right)^2\right] \le \tilde{C}\left(\frac{\hat{\delta}^{k+1}}{N(1-\hat{\delta})^2}\right)^2,
		\end{align}
	\end{shrinkeq}
	and
	\begin{shrinkeq}{-0.5ex}
		\begin{align}\label{eq:3}
			E\left[\Rmnum{2}_2^2\right] 
			&\le E\left[\left(\sum_{l=m+1}^{\infty}\left|\Delta_l\right|\right)^2\right] \le  \tilde{C}\left(\frac{\hat{\delta}^{m+1}}{1-\hat{\delta}}\right)^2 = \left(N\hat{\delta}^{N}\frac{1-\hat{\delta}}{\hat{\delta}}\right)^2 \tilde{C}\left(\frac{\hat{\delta}^{k+1}}{N(1-\hat{\delta})^2}\right)^2.
		\end{align}
	\end{shrinkeq}
	Therefore, by \eqref{eq:0}--\eqref{eq:3}, we have
	\begin{shrinkeq}{-0.5ex}
		\begin{align*}
			E\left[\text{BC}_{k:m}^2\right]
			\le \left(3+2\left(N\hat{\delta}^{N}\frac{1-\hat{\delta}}{\hat{\delta}}\right)^2\right)\tilde{C}\left(\frac{\hat{\delta}^{k+1}}{N(1-\hat{\delta})^2}\right)^2 
			\le C_\delta\frac{\hat{\delta}^{2k}}{\left(N^2\right)}.
		\end{align*}
	\end{shrinkeq}
	In the last inequality, we have used the fact that the function $x\hat{\delta}^{x}$ reaches its maximum at $-1/\ln(\hat{\delta})$.	
	This completes the proof.
\end{proof}

	The result of Proposition \ref{pro:BC} is similar to that of Proposition 3 in \citep{jacob2020}. But our proof is different from that of \citep{jacob2020}. More importantly, it does not require the Markov property used in \citep{jacob2020}, implying that \eqref{eq:bc} holds for unbiased MCQMC. Hence, the difference in variance between unbiased MCQMC and unbiased MCMC is attributed to the term 
	\[
	\text{MSE}_{k:m} = E\left[\left(\frac{1}{m-k+1}\sum\limits_{i=k}^mf(\black{{\bm{X}_i}})-\pi(f)\right)^2\right] = E\left[\left(\frac{1}{N}\sum\limits_{i=1}^Nf({\bm{X}_i}^{\prime})-\pi(f)\right)^2\right],
	\]
	where ${\bm{X}_i}^{\prime} = \black{{\bm{X}_{i+k-1}}}$. 
	Combined with \black{Algorithm \ref{alg:ubest_rand}}, we know that considering $\text{MSE}_{k:m}$ in unbiased MCMC is equivalent to considering it in MCMC. 
	Similarly, after replacing IID sequences with array-(W)CUD sequences, considering $\text{MSE}_{k:m}$ in unbiased MCQMC is also equivalent to considering it in MCQMC. This is equivalent to seeking the convergence rate within the MCQMC framework. 
	However, in MCQMC, as highlighted in the introduction, the theoretical foundation for the convergence rate of MCQMC is presently limited and remains an open and challenging research problem. The works \cite{dick2014,dick2016} 
	showed that there exist driving sequences such that MCQMC yields an error rate of $O(N^{-1+\epsilon})$.
	If MCQMC achieves an RMSE rate of $O(N^{-1+\epsilon})$, it then follows immediately from Proposition \ref{pro:BC} and \eqref{eq:varf} that this rate is retained for unbiased MCQMC. However, the driving sequences in \cite{dick2014,dick2016} are not explicitly constructed. Studying the convergence rate of MCQMC with explicitly constructed driving sequences would contribute to addressing the convergence rate theory for unbiased MCQMC. This is left for future research. On the other hand, \citep{harase2021,harase2024} demonstrated a nearly \(O(N^{-1})\) convergence rate in their numerical examples for commonly used CUD sequences,  without a theoretical guarantee. It is thus expected that MCQMC converges faster than MCMC in the unbiased scheme, as shown by the numerical results in Section \ref{sec:experiments}.

\section{Numerical experiments}\label{sec:experiments}
In this section, we study three types of Bayesian regression problems to demonstrate the performance of unbiased MCQMC across different datasets and dimensions. Let $\bm{D} \in \mathbb{R}^{n\times p}$ be the observation matrix of predictor variables, $\bm{D}_i\in \mathbb{R}^{1\times p}$ be the $i$-th row of $\bm{D}$ and $\bm{\beta} =(\beta_1,\beta_2,\ldots,\beta_p)^{T}\in\mathbb{R}^{p\times 1}$ be the regression parameters. We use Gibbs samplers to estimate the posterior mean of $\bm{\beta}$.

To estimate the variances of the estimators, we use the randomized array-(W)CUD sequences. Following \citep{harase2021}, we adopt the digital shifts method \citep{owenqmc} on Harase's method. Additionally, following \citep{tribble2008}, we apply the random shifts method \citep{cranley1976} to Liao's method. In Liao's method, we use the classical Sobol' sequence with direction numbers from \citep{joe2003}, where the dimension equals the number of array-WCUD points used in the update function, and the number of Sobol' points is $N$. For $N \in \{2^{10},2^{11},\ldots,2^{32}\}$, we use both Harase's method and Liao's method to compare their performance in unbiased MCQMC. For other sample sizes, due to the constraints on the sample size of Harase's method, we only use Liao's method. 

For a burn-in period $k$ and a sample size $N$, we repeat the chains $R$ times independently, resulting in unbiased estimators of the posterior mean of the parameter $\bm\beta$, denoted by $F_{k:m}^{(1)}, \ldots, F_{k:m}^{(R)}$, where $m = N+k-1$. A combined unbiased estimator \(\hat{\mu}_{\text {pool}} = \frac{1}{R} \sum_{r=1}^{R} F_{k:m}^{(r)}\) then serves as the final estimator whose $j$-th component is denoted by $\hat{\mu}_{\text {pool}}^{(j)}$. Thanks to the unbiasedness of the estimators, the component-wise MSE for each $\beta_j$ and the total MSE of the vectorized estimators $\hat{\mu}_{\text {pool}}$ are estimated by
\begin{equation}\label{eq:var}
	\hat{\sigma}_{j}^2= \frac{1}{R(R-1)} \sum_{r=1}^R (F_{k:m}^{(r,j)} - \hat{\mu}_{\text{pool}}^{(j)})^2,~\text{and}~~ \hat{\sigma}_{\text{tot}}^2 = \sum_{j=1}^p \hat{\sigma}_{j}^2, 
\end{equation}
respectively, where $F_{k:m}^{(r,j)}$ denotes the $j$-th component of $F_{k:m}^{(r)}$. We use $\hat{\sigma}_j$ and $\hat{\sigma}_{\text{tot}}$ as the associated empirical RMSEs. Hereafter, we abbreviate unbiased MCMC as ``ubMCMC", unbiased MCQMC as ``ubMCQMC", unbiased MCQMC driven by Liao's method as ``ubMCQMC-L" and unbiased MCQMC driven by Harase's method as ``ubMCQMC-H".

In ubMCQMC, deciding when to use array-WCUD sequences is crucial. In the sampling process of $(\black{{\bm{X}_t}})_{t\ge0}$, directly replacing IID sequences with array-WCUD sequences is inappropriate.  The works \cite{chen2011,harase2021,harase2024,liu2023} recommended to use array-WCUD sequences on MCQMC after a burn-in period because doing otherwise would compromise the overall uniformity of array-WCUD sequences. Motivated by these works, we thus use array-WCUD sequence after the $(k-1)$-th step and use an IID sequence before the $(k-1)$-th step (i.e., the burn-in period), as the samples generated in the previous $k$ steps are actually skipped.

Regarding how to choose an appropriate $k$, \cite{jacob2020} suggested to take a large quantile of $\tau$ in ubMCMC. They conducted a small number (e.g., 1000) of independent experiments as a pilot run to obtain samples of $\tau$, and then took $k$ as the 99\% quantile of these samples. However, such a strategy may not be effective for ubMCQMC. 
It follows from Proposition 3 in \citep{jacob2020} that the bias correction term $\text{BC}_{k:m}$  in \eqref{eq:Fkm} admits $\sqrt{E\left[\text{BC}_{k:m}^2\right]} = O(N^{-1})$, and the RMSE of ubMCMC is thus dominated by the term $\text{MCMC}_{k:m}$, which is of order $O(N^{-1/2})$. Although the bias correction term still contributes $O(N^{-1})$ to the RMSE of ubMCQMC as confirmed by Proposition~\ref{pro:BC}, the term $\text{MCMC}_{k:m}$ often converges faster than that of ubMCMC, with a nearly $O(N^{-1})$ rate for some cases \citep{harase2021,harase2024}. This implies that the bias correction term may affect more significantly the performance of ubMCQMC compared to ubMCMC. 
On the other hand, to capture the tail distribution of $\tau$ well, it often requires a large amount samples of $\tau$ in the pilot run. To alleviate the impact of the bias correction term by a small set of samples of $\tau$, we propose to take a somewhat conservative \( k \) as twice the 99\% quantile of 1000 samples of $\tau$ in the following experiments. For fair comparisons, we still use this choice of $k$ for ubMCMC.
Moreover, we find that the empirical distributions of $\tau$ obtained by IID sequences and array-WCUD sequences are very similar in our experiments. This suggests that one may compute the 99\% sample quantile of $\tau$ based on IID driving sequences.  In Appendix \ref{app:k}, we provide some intuitive experiments on examining the impact of the burn-in period $k$.

\subsection{Bayesian linear regression model}\label{subsec:linear}  The first example is a Bayesian linear regression model which was used in \citep{harase2024}. Given the observations of response variable $\bm{y} = (y_1,y_2,\ldots,y_n)^{T} \in \mathbb{R}^{n\times 1}$, the linear model is expressed as \(y_i= \bm{D}_i\bm{\beta}+\epsilon_i\) where $\epsilon_i \stackrel{iid}\sim \mathcal{N}(0, \sigma^2)$. Assume that $\bm{\beta}$ and $\sigma^2$ have independent prior distributions: 
$$
\bm{\beta} \sim \mathcal{N}(\bm{b}_{0}, \bm{B}_{0}), \quad \sigma^2 \sim \text{IGa}(n_0/2,s_0/2), 
$$
where $\bm{b}_{0} = \bm{0}^{p \times 1}$, $\bm{B}_{0} = \text{diag}(100,100,\ldots,100)$ is a $p\times p$ diagonal matrix with each diagonal element equal to $100$, $n_0 = 5$, $s_0  = 0.01$, and $\text{IGa}(\alpha_1,\alpha_2)$ denotes the inverse Gamma distribution with a shape parameter $\alpha_1$ and a scale parameter $\alpha_2$. The full conditional distributions are as follows,
$$
\bm{\beta}|\sigma^2 \sim \mathcal{N}\left(\bm{b}_1, \bm{B}_1\right), \quad \sigma^2 | \bm{\beta} \sim \text{IGa}\left(n_1/2,s_1/2\right),
$$
where $\bm{b}_1=\bm{B}_1\left(\bm{B}_0^{-1} \bm{b}_0+\sigma^{-2}\bm{D}^{\top}\bm{y}\right)$, $ \bm{B}_1^{-1}=\bm{B}_0^{-1}+\sigma^{-2} \bm{D}^{\top}\bm{D}$, $n_1=n_0+n$ and $s_1=s_0+(\bm{y}-\bm{D} \bm{\beta})^{\top}(\bm{y}-\bm{D} \bm{\beta})$.  To generate a multivariate normal $\bm Q\sim \mathcal{N}(\bm{\mu}$, $\bm{\Sigma})$ in $d$ dimensions, it suffices to take $\bm{Q} = \bm{\mu} + \bm{C}\Phi^{-1}(\bm V)$, where $\bm{C}\bm{C}^{T} = \bm{\Sigma}$, $\bm V \sim \black{\mathbf{U}}(0,1)^d$ and $\Phi^{-1}(\cdot)$ is the inverse CDF function of the standard normal distribution \cite{owenmc}. Hence, the update function for this model is clear. 

We examine two datasets in this model. The first one is the Boston housing dataset analyzed in \citep{harrison1978} with $n=506$ observations on $14$ predictor variables, implying $p = 14$ and the dimension $d=p+1=15$. The second one is the California housing dataset analyzed in \citep{pace1997} with $n=20640$ observations on $9$ predictor variables, implying $p = 9$ and $d=10$. In our experiments, we choose the burn-in period $k = 8$ for the Boston dataset and $k = 6$ for the California dataset based on the aforementioned selecting rule.

We are interested in estimating the posterior mean of the  regression parameter $\bm \beta\in \mathbb{R}^{p\times 1}$. The total RMSEs of the estimators are shown in Figure \ref{Lineara}-\ref{Linearb} for $N \in  \{2^{10},2^{11},\ldots,2^{16}\}$, and the RMSEs for each components of $\bm\beta$ are presented in Figure \ref{Linearc}-\ref{Lineard} with \(N \in \{2^{10},2^{13},2^{16}\}\).
All RMSEs are computed based on $R = 100$ repetitions.
\begin{figure}[htbp]
	\vspace{-4mm}
	\centering
	\subfigure[Total RMSEs for Boston.]{\label{Lineara}
		\begin{minipage}[t]{0.4\linewidth}
			\centering
			\includegraphics[width=0.9\linewidth]{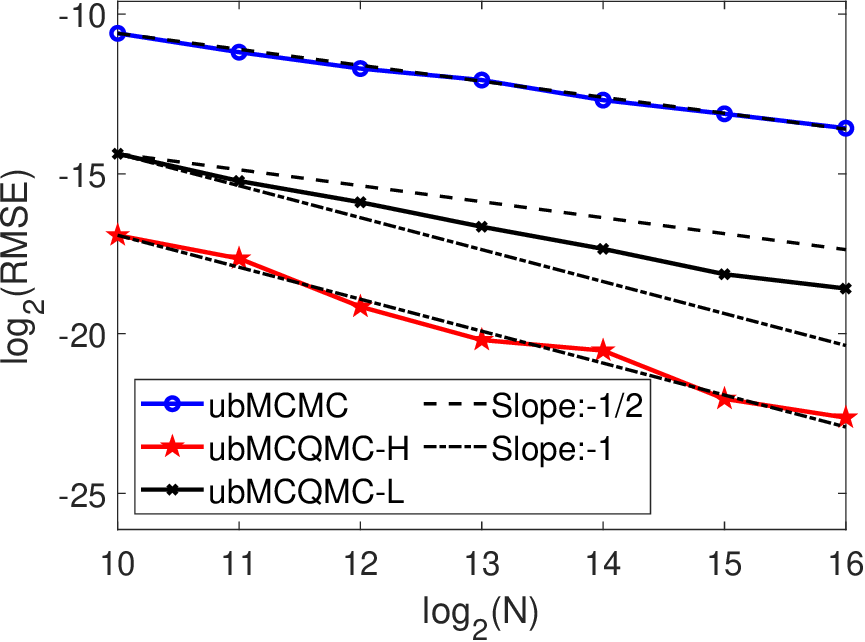}
		\end{minipage}
	}
	\vspace{-3mm}
	\subfigure[Total RMSEs for California.]{\label{Linearb}
		\begin{minipage}[t]{0.4\linewidth}
			\centering
			\includegraphics[width=0.9\linewidth]{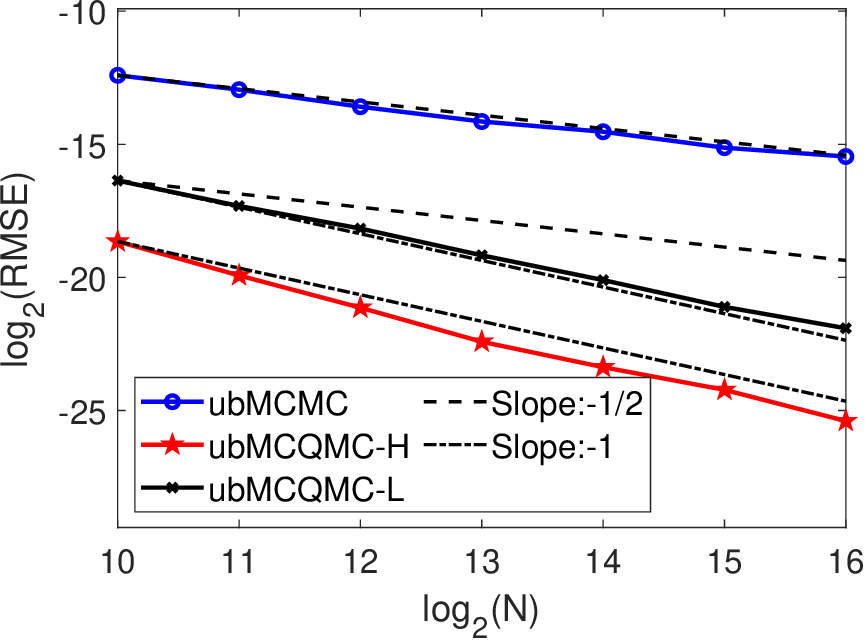}
		\end{minipage}
	}\\
	\subfigure[Component-wise RMSEs for Boston.]{\label{Linearc}
		\begin{minipage}[t]{0.4\linewidth}
			\centering
			\includegraphics[width=0.9\linewidth]{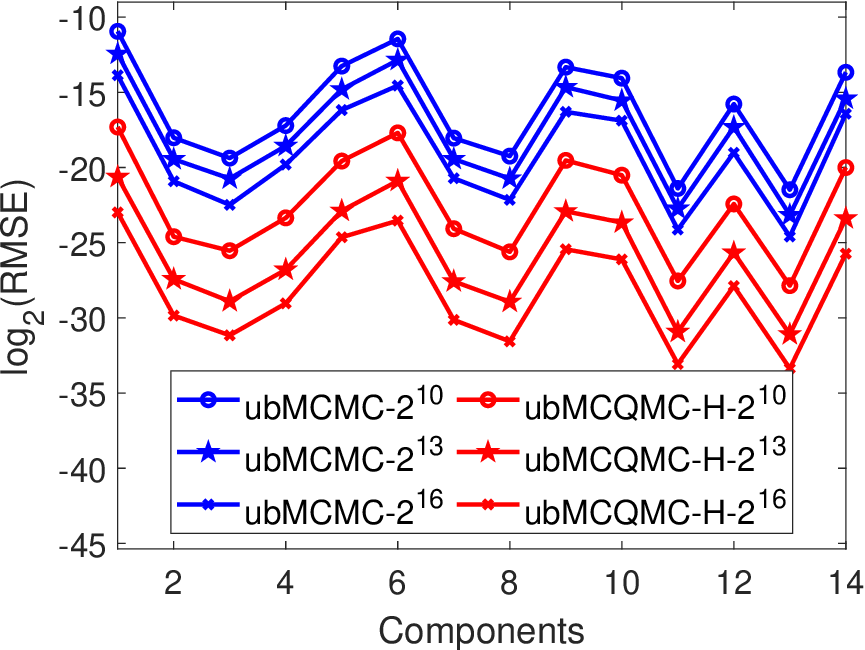}
		\end{minipage}
	}
	\vspace{-3mm}
	\subfigure[Component-wise RMSEs for California.]{\label{Lineard}
		\begin{minipage}[t]{0.4\linewidth}
			\centering
			\includegraphics[width=0.9\linewidth]{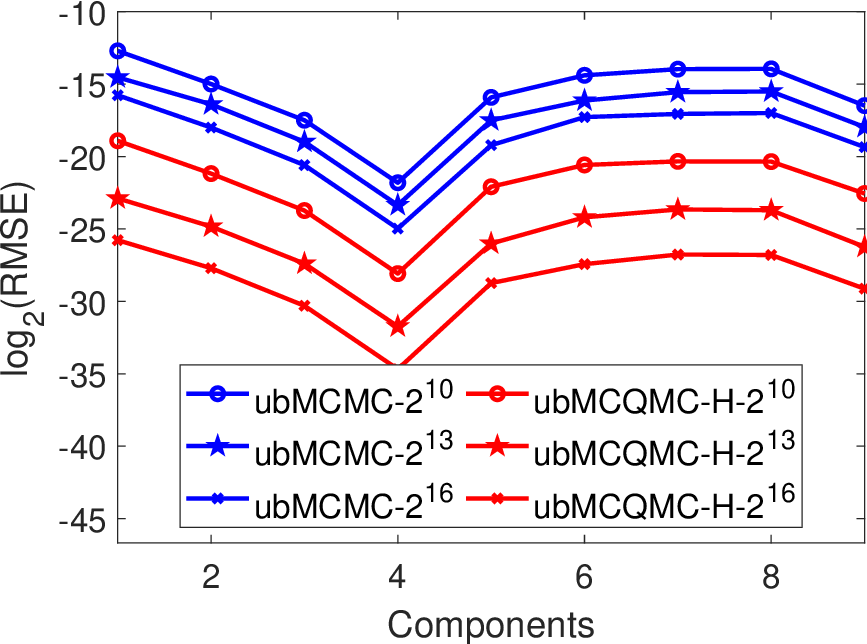}
		\end{minipage}
	}
	\caption{RMSEs for the Boston and California datasets.}
	\label{fig:Linear}
	\vspace{-6mm}
\end{figure}

From Figure \ref{Lineara}-\ref{Linearb}, we observe that the RMSE convergence rate of ubMCQMC-H reaches approximately \(O(N^{-1})\) for both datasets. Based on a simple least square estimation, the rate of ubMCQMC-L attains approximately \(O(N^{-0.71})\) for the Boston dataset and \(O(N^{-1})\) for the California dataset. All of them beat the \(O(N^{-1/2})\) rate of ubMCMC, and ubMCQMC-H performs the best. Figure \ref{Linearc}-\ref{Lineard} further show that ubMCQMC-H yields smaller RMSEs than ubMCMC for all components of $\bm \beta$.


\begin{table}[htbp]
\centering
\caption{Total RMSE reduction factors and runtimes for the Boston and California datasets.}
\scalebox{0.85}{
	\begin{tabular}{ccccccccccccc}
		\hline
		\multirow{3}[6]{*}{$N$} &       & \multicolumn{5}{c}{Boston}    &    & \multicolumn{5}{c}{California} \bigstrut\\
		\cline{3-7} \cline{9-13}   &     & \multicolumn{2}{c}{ubMCMC} &    & \multicolumn{2}{c}{ubMCQMC-H} &    & \multicolumn{2}{c}{ubMCMC} &   & \multicolumn{2}{c}{ubMCQMC-H} \bigstrut\\
		\cline{3-4}\cline{6-7}\cline{9-10}\cline{12-13}          &       & RMSE  & Time (s)  &       & RRF  & Time (s)  &       & RMSE  & Time (s)  &       & RRF  & Time(s) \bigstrut\\
		\cline{1-13}    
		$2^{10}$     &       & 6.43E-04 & 0.01 &       & 79.89 & 0.01 &       & 1.84E-04 & 0.10 &       & 75.75 & 0.10 \bigstrut[t]\\
		$2^{13}$     &       & 2.34E-04 & 0.09 &       & 281.19 & 0.10 &       & 4.59E-05 & 0.80 &       & 309.22 & 0.81 \\
		$2^{16}$     &       & 8.21E-05 & 0.69 &       &  532.60 & 0.81 &       & 4.46E-05 & 6.27 &       & 979.79 & 6.39 \bigstrut[b]\\
		\hline
	\end{tabular}%
}
\label{tab:linear}%
\end{table}%

To better illustrate the improvement of ubMCQMC-H compared to ubMCMC, we provide the total RMSE reduction factors (RRFs)  and runtimes (in seconds) for different values of \(N\) in Table~\ref{tab:linear}. The runtime is averaged over the 100 repetitions. We can see that ubMCQMC significantly accelerates the convergence rate compared to ubMCMC, along with a comparable  cost of ubMCMC. Moreover, ubMCQMC performs better on the California dataset with 20640 observations than on the Boston dataset with 506 observations. In previous MCQMC experiments \citep{harase2021,tribble2008}, the improvement for Gaussian distribution models was significant. Since the posterior distribution of large datasets approximates a Gaussian distribution, this may be amenable to unbiased MCQMC for achieving a large RMSE reduction as observed for the California dataset.

This model for the Boston dataset was studied in \cite{harase2024}, where MCQMC driven by Harase's method with \( k = 5000 \) was used to estimate the posterior mean. We compare the total RMSEs of the standard MCMC and MCQMC which are biased. Here, we use the estimated value of ubMCQMC-H with \( N = 2^{16} \) as the true value to calculate the RMSEs of the biased estimators. It can be observed that the results of MCQMC and unbiased MCQMC are very similar. However, it is noteworthy that the unbiased MCQMC method sets \( k \) just as $8$ rather than $5000$ used in \cite{harase2024}. Moreover, even after discarding the first $5000$ samples in MCQMC, the resulting  estimator is still biased. Unbiased MCQMC is able to address the unbiasedness issue by selecting a very small burn-in period \( k \) for this model.

\begin{figure}[htbp]
\vspace{-2mm}
\centering
\includegraphics[width=0.4\linewidth]{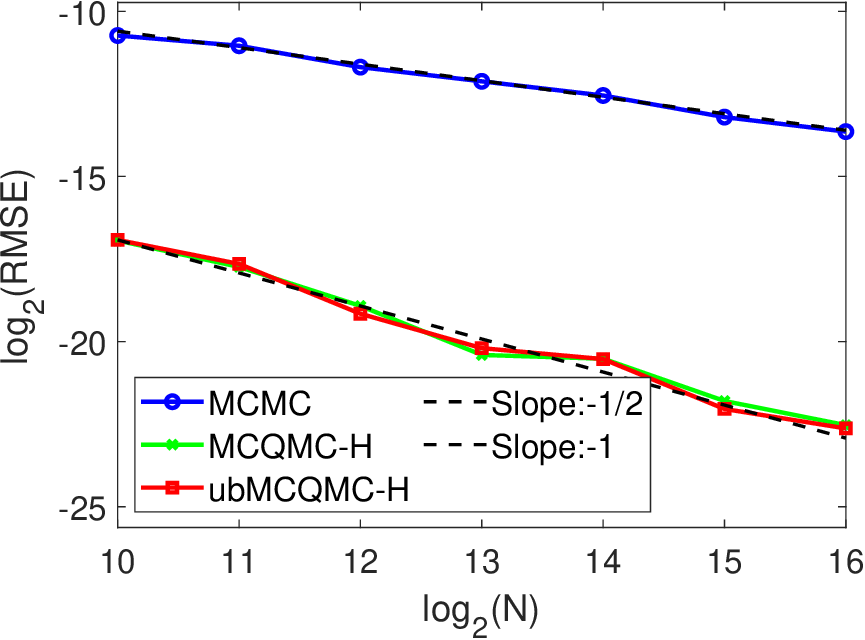}
\caption{Total RMSEs of MCMC, MCQMC-H and ubMCQMC-H for the Boston dataset.}
\label{fig:linearmc}
\vspace{-4mm}
\end{figure}

Note that the introduction of ubMCMC was aimed at achieving parallelization, i.e., maintaining a small $N$ and running $R$ short chains in parallel instead of long chains as in MCMC. Utilizing the property that estimators on each chain are unbiased and the linearity of $R$ in the computational budget, ubMCMC achieves high precision in a relatively short time by increasing $R$. When parallel resources are limited, for ubMCMC, the benefits of running long chains or repeating multiple chains are similar. However, it is obvious that benefiting from the higher convergence order of ubMCQMC, running longer chains can achieve higher precision within a similar budget. 

When parallel resources are abundant, ubMCQMC is able to parallelize shorter chains than ubMCMC to achieve comparable efficiency of MCMC estimators. Following \citep[Section 3.1]{jacob2020}, to compare the efficiency of  the unbiased estimator $F_{k:m}(\black{\mathbf{X}},\black{\mathbf{Y}})$ and the biased estimator $\text{MCMC}_{k:m}$, we define the asymptotic inefficiency of $F_{k:m}(\black{\mathbf{X}},\black{\mathbf{Y}})$ as the product of its variance and its expected cost. Following \citep{jacob2020}, the total cost of $F_{k:m}(\bm{X},\bm{Y})$ is $c[2(\tau-1)+\max(1,m+1-\tau)]$, where $c$ is the cost of ubMCMC or ubMCQMC for a single state. 

For $m \geq k$, when $k$ increases, we expect $(m-k+1)\text{MSE}_{k:m}$ to converge to 
$$V_{k:m} := V\left[(m+k-1)^{-1/2}\sum\limits_{t=k}^m f(\black{{\bm{X}_t}})\right]$$ 
with $\black{{\bm{X}_t}} \sim \pi$. The limit of $V_{k:m}$ as $m \to \infty$ denoted by $V_{\infty}$ is the asymptotic variance of the MCMC estimator. We assume that $(m-k+1)\text{MSE}_{k:m} \approx V_{\infty}$ for $k$ large enough. The loss of efficiency of unbiased methods compared with standard MCMC is defined by 
\begin{equation}\label{eq:efficiency}
\frac{c}{c_0}\times\frac{E[2(\tau-1)+\max(1,m+1-\tau)]\times V[F_{k:m}(\black{\mathbf{X}},\black{\mathbf{Y}})]}{V_{\infty}},
\end{equation}
where $c_0$ is the cost of standard MCMC in each iteration. It is obvious that $c = c_0$ for ubMCMC. For ubMCQMC, although the generation of array-WCUD sequences takes more time than IID sequences, the overhead can be negligible since the main cost of simulation arises from the calculation of the update function in each iteration as observed in Table~\ref{tab:linear}. We thus have $c\approx c_0$ for ubMCQMC if we use the same update function as in ubMCMC. For simplicity, we take $c= c_0$ in calculating loss of efficiency for the Bayesian linear regression model. The expected cost is measured by the number of iterations. And, we should note that the cost of determining $k$ in the pilot run is not counted here.

As discussed in \citep[Section 3.1]{jacob2020}, when $k$ and $m$ are sufficiently large, the loss of efficiency for ubMCMC is close to $1$, implying that the variance brought by the unbiasedness can be eliminated by choosing sufficiently large $k$ and $m$. However, in the setting of parallelization, one might prefer to keep $k$ and $m$ relatively small to achieve a suboptimal efficiency, but generate more independent estimators within a given computing time.

For the Bayesian linear regression model, we choose $N$ to achieve the loss of efficiency near $1$. 
We estimate the expected cost and the total variance  of the estimators based on $1000$ repetitions of the simulation. Meanwhile, the asymptotic total variance $V_{\infty}$ of standard MCMC is obtained from $1000$ long chains with $5\times10^5$ iterations and a burn-in period of $10^3$. 
The expected cost, the total variance of estimators, and the loss of efficiency of ubMCMC and ubMCQMC are given in Table \ref{tab:ineffLinear}, where ``Loeff'' denotes the loss of efficiency and ``SF'' represents the saving factor of sample size $N$ for ubMCQMC compared to ubMCMC. We see that for the Boston dataset, when $N = 120$, the efficiency of ubMCMC is comparable to that of standard MCMC. However, for ubMCQMC, to obtain comparable efficiency of standard MCMC, we just need $N = 6$, resulting in a saving of sample size by a factor of $120/6 = 20$. For the California dataset, we reduce $35$-fold sample size. It is worth mentioning that the sample saving factor and the RMSE reduction factor are two different metrics. The sample size saving factor examines the inefficiency (the product of variance and expected cost) of ubMCQMC in short chains driven by Liao's method, while the RMSE reduction factor evaluates the RMSE of ubMCQMC in long chains driven by Harase's method.
\begin{table}[htbp]
\vspace{-4mm}
\centering
\caption{The loss of efficiency for the Boston and California datasets.}
\scalebox{0.85}{
	\begin{tabular}{cccccccccccccc}
		\hline
		\multirow{2}[4]{*}{} & \multicolumn{6}{c}{Boston}                    &       & \multicolumn{6}{c}{California} \bigstrut\\
		\cline{2-7}  \cline{9-14}      &   $k$  & $N$   & Cost  & Variance   & Loeff & SF &  &   $k$  & $N$   & Cost  & Variance  & Loeff & SF  \bigstrut\\
		\hline
		ubMCMC & 8     & 120    & 129 & 3.32E-07 & 1.07  & \multirow{2}[2]{*}{20} &       & 6 & 210    & 217    & 1.35E-08 & 1.06  & \multirow{2}[2]{*}{35} \bigstrut[t]\\
		ubMCQMC-L &  	8     & 6    & 15 & 2.57E-06 & 0.96 &       &       &    6   & 6     & 13 & 2.00E-07 & 0.94  &  \bigstrut[b]\\
		\hline
	\end{tabular}%
}
\label{tab:ineffLinear}%
\vspace{-4mm}
\end{table}%

\subsection{Bayesian probit regression model} 
Next we consider a Bayesian probit regression model that was used in \citep{albert1993,tribble2008}. We follow the model setting in \citep{tribble2008}. Given the observations of binary response variable $\bm{y} = (y_1,y_2,\ldots,y_n) \in \{0,1\}^{n\times 1}$, the probit model is expressed as \(y_i=1_{\{z_i>0\}}\) with \(z_i \sim \mathcal{N}(\bm{D}_i\bm{\beta},1)\). Our goal is to estimate the posterior mean of \(\bm{\beta}\). Taking a non-informative prior for \(\bm{\beta}\), we have 
\begin{shrinkeq}{-1.5ex}
$$
\begin{aligned}\bm{\beta}|z_1,z_2,\ldots,z_{n} &\sim \mathcal{N}\left(\left(\bm{D}^T\bm{D}\right)^{-1}\bm{D}^T\bm{Z},\left(\bm{D}^T\bm{D}\right)^{-1}\right),\\z_i|\bm{\beta},z_{-i} &\sim \begin{cases} \mathcal{N}\left(\bm{D}_i\bm{\beta},1\right) \text{~\black{truncated} to~} [0,\infty), \text{~if~} y_i = 1, \\\mathcal{N}\left(\bm{D}_i\bm{\beta},1\right) \text{~\black{truncated} to~} (-\infty,0], \text{~if~} y_i = 0, \end{cases}  1\le i \le n,\end{aligned}
$$
\end{shrinkeq}
where $\bm{Z} = (z_1,z_2,\ldots,z_{n})^{T}$ denotes the latent variables. For this model, the dimension of the underlying states is $d=p+n$.
To sample a random variable $Q$ following the truncated distribution of CDF $F(x)$ over the interval $(a,b)$, we  take $Q = F^{-1}(F(a)+(F(b)-F(a))u)$, where $u\sim \black{\mathbf{U}}(0,1)$ \cite{owenmc}. The specific form of the update function is also clear. 

For this model, we consider two datasets with different sizes. The first one is the Vaso constriction dataset from \citep{finney} with $n = 39$ and $p = 3$, resulting in $d = 42$. The second one is the Mroz’s female labor force participation dataset from \citep{mroz1987} and the fitting model for this dataset comes from \citep[Example 17.1]{woold2016} with $n = 753$ and $p = 8$, resulting in $d = 761$. 

We choose $k=82$ for the Vaso dataset and $k = 50$ for the Mroz dataset. 
\begin{figure}[H]
\vspace{-4mm}
\centering
\subfigure[Total RMSEs for Vaso.]{\label{probita}
	\begin{minipage}[t]{0.4\linewidth}
		\centering
		\includegraphics[width=0.9\linewidth]{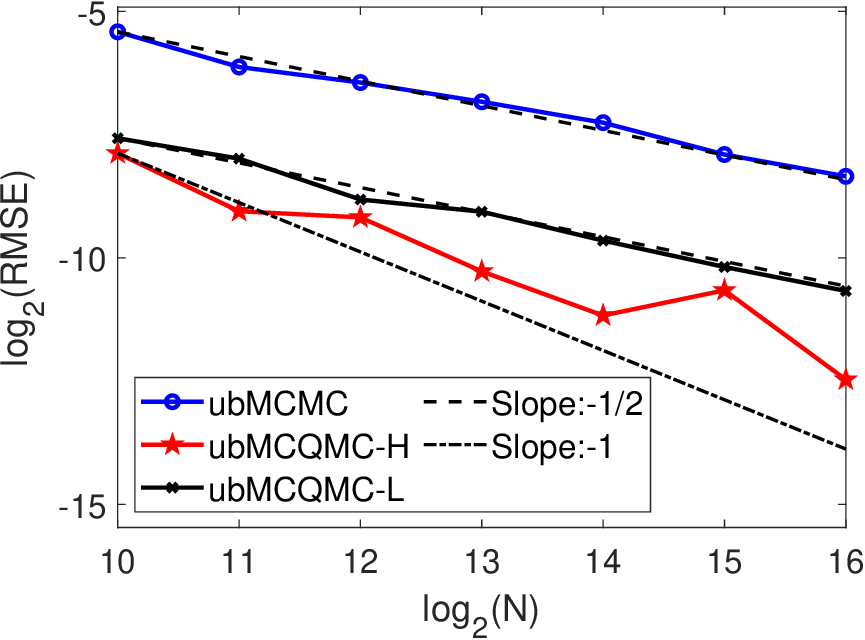}
	\end{minipage}
}
\vspace{-3mm}
\subfigure[Total RMSEs for Mroz.]{\label{probitb}
	\begin{minipage}[t]{0.4\linewidth}
		\centering
		\includegraphics[width=0.9\linewidth]{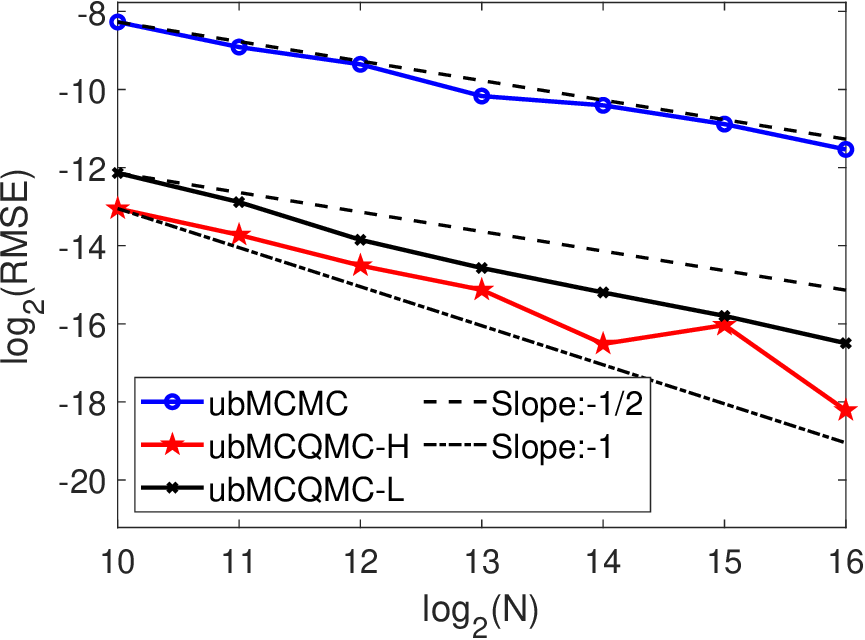}
	\end{minipage}
}\\
\subfigure[Component-wise RMSEs for Vaso.]{\label{probitc}
	\begin{minipage}[t]{0.4\linewidth}
		\centering
		\includegraphics[width=0.9\linewidth]{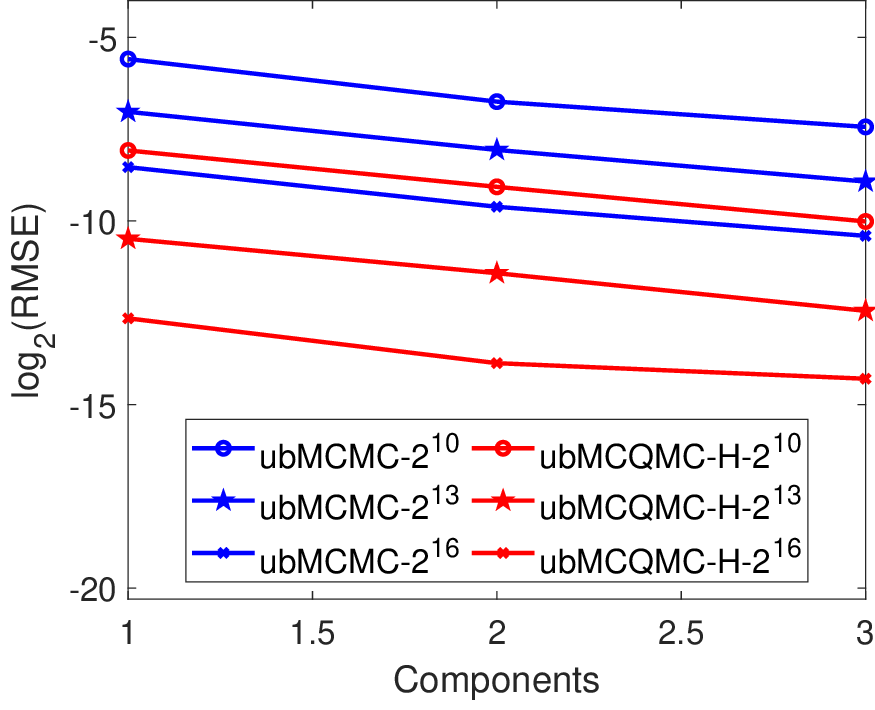}
	\end{minipage}
}
\vspace{-3mm}
\subfigure[Component-wise RMSEs for Mroz.]{\label{probitd}
	\begin{minipage}[t]{0.4\linewidth}
		\centering
		\includegraphics[width=0.9\linewidth]{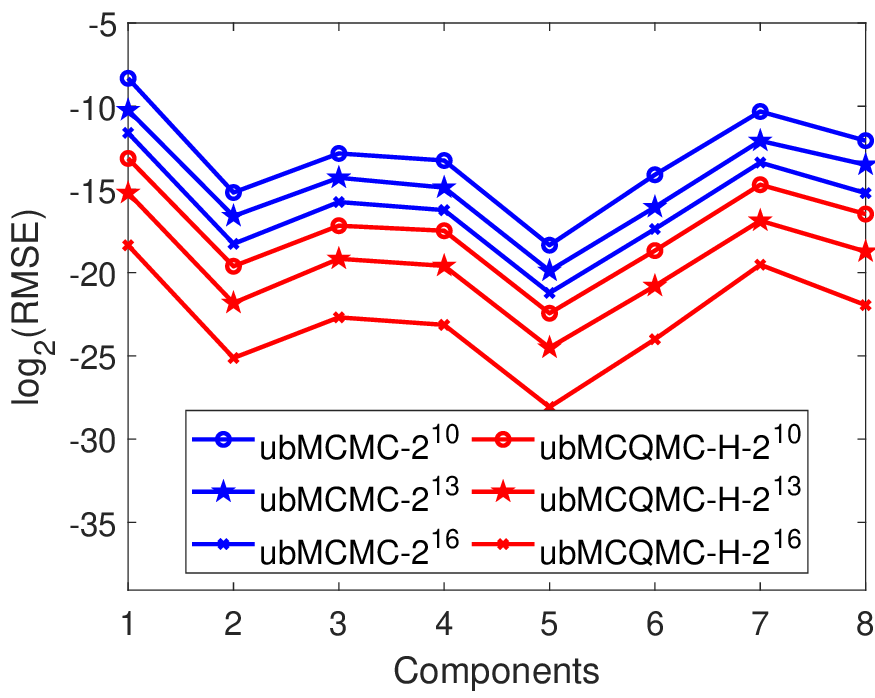}
	\end{minipage}
}
\caption{RMSEs for the Vaso and Mroz datasets.}
\label{fig:probit_tvar}
\end{figure}

Under the same setting of Figure~\ref{fig:Linear}, Figure \ref{probita}-\ref{probitb} show the total RMSEs of ubMCMC and ubMCQMC, and Figure \ref{probitc}-\ref{probitd} show the RMSEs for each component of $\bm\beta$. By a simple least square estimation, we find that the estimated convergence rate of ubMCQMC-H  is nearly $O(N^{-0.67})$ for the Vaso dataset and  $O(N^{-0.79})$ for the Mroz dataset, surpassing the \(O(N^{-1/2})\) rate of ubMCMC. Although ubMCQMC-L yields a worse rate than  ubMCQMC-H, it still performs better than ubMCMC for all cases. From Figure \ref{probitc}-\ref{probitd}, ubMCQMC-H also yields smaller RMSEs than ubMCMC for estimating all components of $\bm\beta$.

\begin{table}[H]
\centering
\caption{Total RMSE reduction factors and runtimes for the Vaso and Mroz datasets.}
\scalebox{0.85}{
	\begin{tabular}{ccccccccccccc}
		\hline
		\multirow{3}[6]{*}{$N$} &       & \multicolumn{5}{c}{Vaso}    &    & \multicolumn{5}{c}{Mroz} \bigstrut\\
		\cline{3-7} \cline{9-13}   &     & \multicolumn{2}{c}{ubMCMC} &    & \multicolumn{2}{c}{ubMCQMC-H} &    & \multicolumn{2}{c}{ubMCMC} &   & \multicolumn{2}{c}{ubMCQMC-H} \bigstrut\\
		\cline{3-4}\cline{6-7}\cline{9-10}\cline{12-13}          &       & RMSE  & Time (s)  &       & RRF  & Time (s)  &       & RMSE  & Time (s)  &       & RRF  & Time (s) \bigstrut\\
		\cline{1-13}    
		$2^{10}$    &     & 2.34E-02 & 0.01 &     & 5.52 & 0.02 &     & 3.24E-03 & 0.16 &    & 27.43 & 0.20 \bigstrut[t]\\
		$2^{13}$    &     & 8.77E-03 & 0.10 &     & 10.86 & 0.13 &    & 8.69E-04 & 1.12 &    & 31.20 & 1.45 \\
		$2^{16}$    &     & 3.07E-03 & 0.81 &     & 17.44 & 1.02 &    & 3.37E-04 & 10.3 &    & 102.66 & 13.6 \bigstrut[b]\\
		\hline
	\end{tabular}%
}
\label{tab:probit_rf}%
\end{table}%

We also provide the total RMSE reduction factors and runtimes (in seconds) for different values of \(N\) in Table \ref{tab:probit_rf}. Compared to Table~\ref{tab:linear}, ubMCQMC gains smaller reduction factors. This may be due to the fact that the dimension $d$ for the Bayesian probit regression model is much larger (up to $761$ for the Mroz dataset), and QMC methods are known to suffer from the curse of dimensionality. On the other hand, since the number of the observations is only 39 for the Vaso dataset, the associated posterior may be far from Gaussian, leading to a worse error rate compared to the Mroz dataset with 753 observations.

Next, we compare the efficiency of ubMCMC and ubMCQMC compared to standard MCMC in Table \ref{tab:ineffprobit} following the same setting of the Bayesian linear regression model. We observe that for the Vaso dataset, when $N = 820$, the efficiency of ubMCMC is comparable to that of standard MCMC. However, for ubMCQMC-L, we just need $N =32$ to achieve comparable efficiency, saving a sample size of $820/32 = 25.6$ times. For the Mroz dataset, ubMCQMC gains a saving factor of $37.5$. 

\begin{table}[htbp]
\centering
\caption{The loss of efficiency for the Vaso and Mroz datasets.}
\scalebox{0.85}{
	\begin{tabular}{cccccccccccccc}
		\hline
		\multirow{2}[4]{*}{} & \multicolumn{6}{c}{Vaso}                    &       & \multicolumn{6}{c}{Mroz} \bigstrut\\
		\cline{2-7}  \cline{9-14}      &   $k$  & $N$   & Cost  & Variance   & Loeff & SF &  &   $k$  & $N$   & Cost  & Variance  & Loeff & SF  \bigstrut\\
		\hline
		ubMCMC & 82    & 820   & 908 & 6.98E-05 & 1.05  & \multirow{2}[2]{*}{25.6} &  & 50    & 600 &  657 & 1.41E-06 & 1.09  & \multirow{2}[2]{*}{37.5} \bigstrut[t]\\
		ubMCQMC-L & 82    & 32    & 120 & 4.65E-04 & 0.92  &       &       & 50    & 16    & 73 & 1.04E-05 & 0.89 & \bigstrut[b]\\
		\hline
	\end{tabular}%
}
\label{tab:ineffprobit}%
\end{table}%


\subsection{Bayesian logistic regression model} \label{subsec:logistic}
Last, we consider a more challenging problem, the Bayesian logistic regression model with P\'olya-Gamma (PG) Gibbs sampler \citep{polson2013} which was used in \citep[Section S4]{jacob2020}. Given the observations of binary response variable $\bm{y} = (y_1,y_2,\ldots,y_n) \in \{0,1\}^{n\times 1}$, the logistic model is expressed as $\mathbb{P}(y_i = 1) = 1/(1+\exp(-\bm{D}_i\bm{\beta}))$. Assume that the prior distribution of $\bm{\beta}$ is $\bm{\beta} \sim \mathcal{N}(\bm{b}, \bm{B})$, where $\bm{b} = \bm{0}^{p \times 1}$ and $\bm{B} = \text{diag}(10,10,\ldots,10)$ is a $p\times p$ diagonal matrix. Our goal is to estimate the posterior mean of \(\bm{\beta}\). The PG Gibbs sampler is extended with $n$ PG distribution variables $\{W_i\}_{i=1}^{n}$ targeting the posterior distribution. The PG distribution with parameters $(1,c)$ is denoted by PG$(1,c)$.
Denote $\bm{W} = (W_1,\ldots,W_n)$ and  $\tilde{\bm{y}}=\left(y_1-1 / 2, \ldots, y_n-1 / 2\right)$. The update process of the PG Gibbs sampler is given by 
$$
\begin{aligned}
\bm{\beta} | W_1,W_2,\ldots,W_n & \sim \mathcal{N}\left(\Sigma(\bm{W})\left(\bm{D}^T \tilde{\bm{y}}+\bm{B}^{-1} \bm{b}\right), \Sigma(\bm{W})\right), \quad  \\
W_i | \bm{\beta},W_{-i} & \sim \mathrm{PG}\left(1,\left|\bm{D}_i \bm{\beta}\right|\right) \quad i = 1, \ldots, n,
\end{aligned}
$$
where  $\Sigma(\bm{W})=\left(\bm{D}^T \operatorname{diag}(\bm{W}) \bm{D}+\bm{B}^{-1}\right)^{-1}$. The probability density function $f_{\mathrm{pg}}(x;1,c)$ of PG$(1,c)$ is intractable, but the ratio of two density evaluations can be calculated by 
$$
\quad \frac{f_{\mathrm{pg}}\left(x ;1,c_2\right)}{f_{\mathrm{pg}}\left(x ;1,c_1\right)}=\frac{\cosh \left(c_2 / 2\right)}{\cosh \left(c_1 / 2\right)} \exp \left(-\left(\frac{c_2^2}{2}-\frac{c_1^2}{2}\right) x\right),\ x>0.
$$
Hence, the update of $W_i$ can also be implemented fast in the maximal coupling method.

Polson et al. \citep{polson2013} gave an acceptance-rejection procedure to sample PG$(1,c)$ distribution, in which a mixture of a truncated inverse Gaussian distribution and a truncated exponential distribution is used as the proposal.  The PG Gibbs sampler thus cannot avoid the acceptance-rejection process because of the sampling of PG$(1,c)$ distributions, which may degrade the performance of ubMCQMC. So this sampler is more challenging than the last two samplers for ubMCQMC. Throughout the entire acceptance-rejection process, we use array-WCUD points only for the initial sampling the mixture proposal. If the proposal is rejected, we continue to use IID uniform points in the subsequent steps. 

For ubMCMC, we follow the sampling scheme for PG distribution in \citep[Appendix 1]{windle2013}. For ubMCQMC, we know that
sampling from a mixture distribution typically involves using a random variable to determine which distribution to sample from, followed by sampling from the chosen distribution. In the following, we consider three difference sampling schemes for the mixture proposal in ubMCQMC.\\
\textbf{Approach 1:} We use an IID $U(0,1)$ variable to determine a component of the mixture distribution, and then using an array-WCUD point to sample the chosen component. If the chosen component is the truncated inverse Gaussian distribution, we use the acceptance-rejection sampling method in \citep[Appendix 1]{windle2013}, in which the array-WCUD point is only used in the initial sampling. For a fixed array-WCUD point, the output for the  mixture distribution is thus not unique, potentially disrupting the uniformity of the samples. \\
\textbf{Approach 2:} We replace the IID $U(0,1)$ variable in Approach 1 by an array-WCUD point. This method slightly improves the uniformity of the samples, but the number of array-WCUD points used in each iteration is increased from $p+n$ to $p+2n$, leading to a higher dimensional problem for ubMCQMC.\\
\textbf{Approach 3:} We sample the mixture distribution via its inverse CDF. Clearly, the inverse CDF is not available in a closed form, and we use Newton's method to approximate it. This calls for an extra computation to estimate the inverse CDF. Compared to Approaches 1 and 2,  this approach can avoid introducing the discontinuities in the update function from the components selection and the sampling of the truncated inverse Gaussian distribution, while keeping the number of the array-WCUD points used in each iteration as the dimension of states $d=p+n$.

In Table \ref{tab:diffm}, we compare ubMCQMC-H combined with the three approaches to ubMCMC for the Pima Indian dataset \citep{polson2013,windle2013} with $n = 392$ observations on $9$ predictor variables, resulting in $d = 401$, to illustrate the performance of the three approaches, in which we choose $k = 32$ and $ N \in \{2^{10},2^{13},2^{16}\}$. Although this example has high dimensionality and involves the acceptance-rejection process, ubMCQMC methods still achieve some reduction in RMSE, especially for Approach 3. We find that the RMSE reduction factors are approximately $2$ for Approach 1, which are insensitive with respect to \(N\). Approach 2 gains larger reduction factors but the number of array-WCUD points used in each iteration increases from $401$ to  $793$. In contrast, Approach 3 maintains the number of  array-WCUD points used in each iteration at $401$ and achieves the largest reduction factor, increasing from $9$ to $22.6$ when the sample size goes up. Although the runtime of Approach 3 is approximately twice that of ubMCMC, its benefits are substantial due to the use of the inverse CDF maintaining the balance of array-WCUD points. Taking $N=2^{10}$ for example, if we want to reduce RMSE by a factor of $9$ in ubMCMC, the sample size needs to increase by $81$ times, which will roughly increase the runtime by a factor of  $81$.

\begin{table}[htbp]
\centering
\caption{Total RMSE reduction factors and runtimes for  the Pima dataset.}
\scalebox{0.85}{
	\begin{tabular}{ccccccccccccc}
		\hline
		\multirow{3}[4]{*}{$N$} & & & & \multicolumn{9}{c}{ubMCQMC-H}\\\cline{6-13}
		& &\multicolumn{2}{c}{ubMCMC} & &
		\multicolumn{2}{c}{Approach 1} & & \multicolumn{2}{c}{Approach 2} & & \multicolumn{2}{c}{Approach 3} \bigstrut\\
		\cline{3-4}\cline{6-7} \cline{9-10} \cline{12-13}& &  RMSE & Time (s) & & RRF & Time (s)& &  RRF & Time (s) & & RRF & Time (s) \bigstrut\\
		\hline
		$2^{10}$ & & 2.33E-03 & 0.50 & & 2.11 & 0.60 &  & 4.64 & 0.62 &   & 9.00  & 0.96 \bigstrut[t]\\
		$2^{13}$ & & 7.99E-04 & 3.73 & & 2.05 & 4.47  & & 4.86 & 4.66 &  & 18.65  & 7.00 \\
		$2^{16}$ & & 2.76E-04 & 29.77 & & 2.14 & 35.72 & & 5.57 & 37.35 & & 22.60  & 55.72\bigstrut[b]\\
		\hline
	\end{tabular}%
}
\label{tab:diffm}%
\end{table}


We also examine Approach 3 for a larger dataset --- the German credit dataset with $1000$ observations and $20$ predictor variables, which was used in \citep{jacob2020,polson2013}.  Among these $20$ variables, seven are quantitative, while the rest are categorical. After converting the categorical variables into binary variables, we obtain \(49\) predictor variables, resulting in $d = 1049$. 

For this dataset, we choose $k=260$. From Figure \ref{fig:logistic_tvar}, the convergence rate of ubMCQMC-H for $d=1049$ is slightly better than $O(N^{-1/2})$. Comparing to the $O(N^{-1})$ convergence rate of ubMCQMC-H observed in the Bayesian linear regression model, ubMCQMC clearly suffers from the curse of dimensionality. However, for such a challenging  problem, ubMCQMC still performs better than ubMCMC for all cases in Figure \ref{fig:logistic_tvar}. Total RMSE reduction factors of ubMCQMC-H with different values of \(N\) for the German dataset are also provided in Table \ref{tab:logistic}. It is observed that for this sampler with 1049 dimensions, ubMCQMC-H still reduces the total RMSE of ubMCMC by a factor ranging from $6.06$ to $11.79$.

\begin{figure}[htbp]
\vspace{-4mm}
\centering
\subfigure[Total RMSEs for Pima.]{\label{logistica}
	\begin{minipage}[t]{0.4\linewidth}
		\centering
		\includegraphics[width=0.9\linewidth]{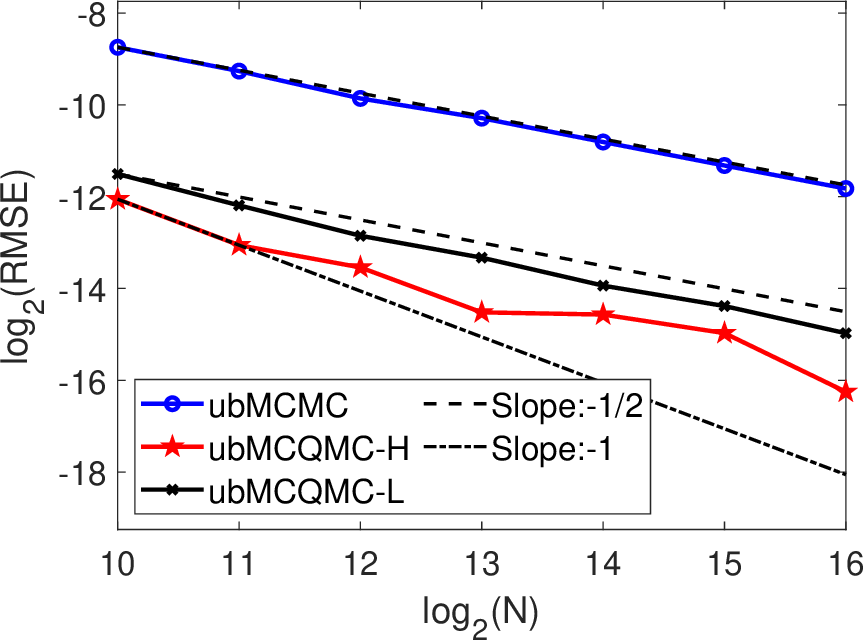}
	\end{minipage}
}
\vspace{-3mm}
\subfigure[Total RMSEs for German.]{\label{logisticb}
	\begin{minipage}[t]{0.4\linewidth}
		\centering
		\includegraphics[width=0.9\linewidth]{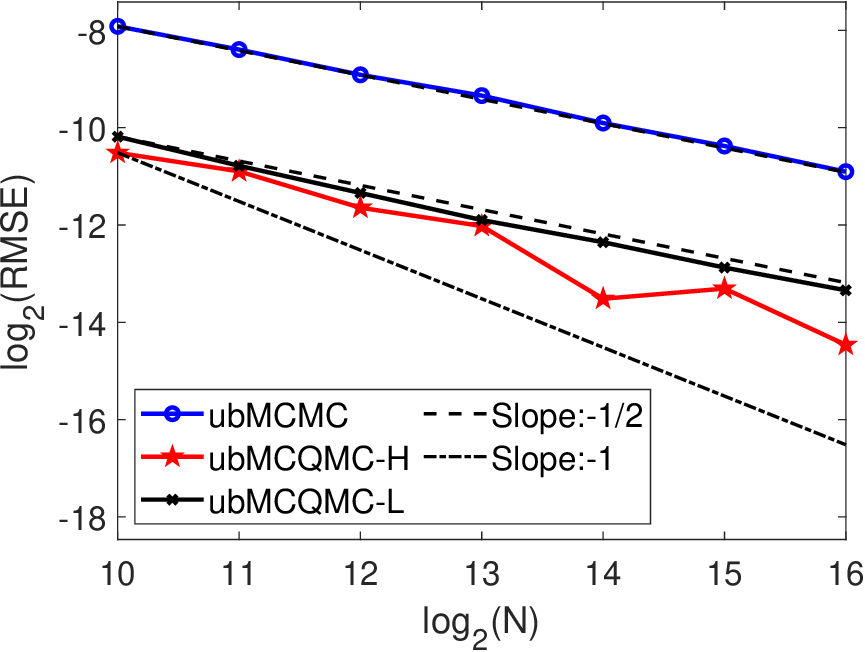}
	\end{minipage}
}
\\
\subfigure[Component-wise RMSEs for Pima.]{\label{logisticc}
	\begin{minipage}[t]{0.4\linewidth}
		\centering
		\includegraphics[width=0.9\linewidth]{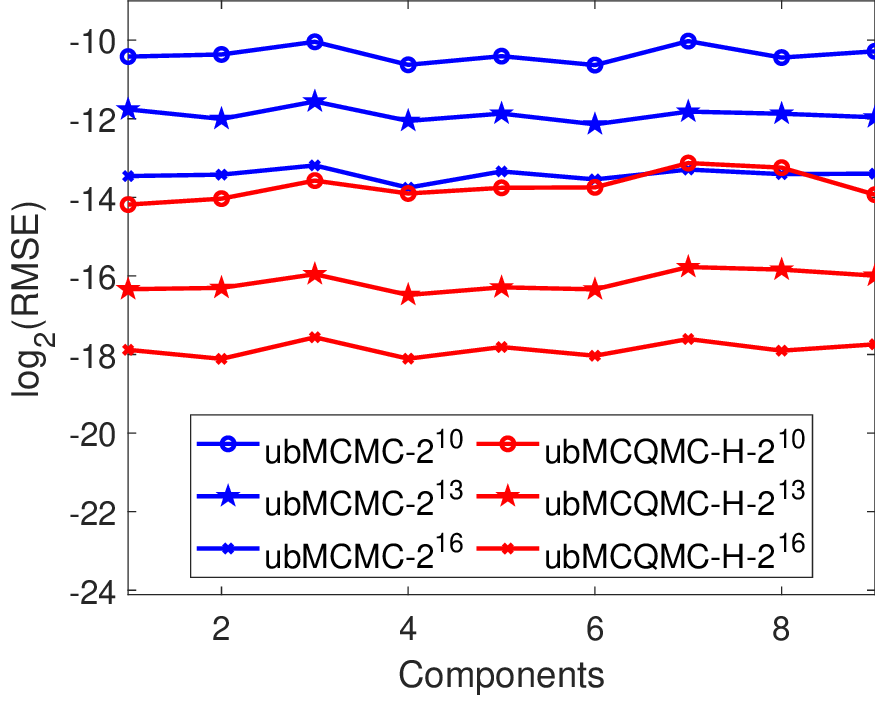}
	\end{minipage}
}
\vspace{-3mm}
\subfigure[Component-wise RMSEs for German.]{\label{logisticd}
	\begin{minipage}[t]{0.4\linewidth}
		\centering
		\includegraphics[width=0.9\linewidth]{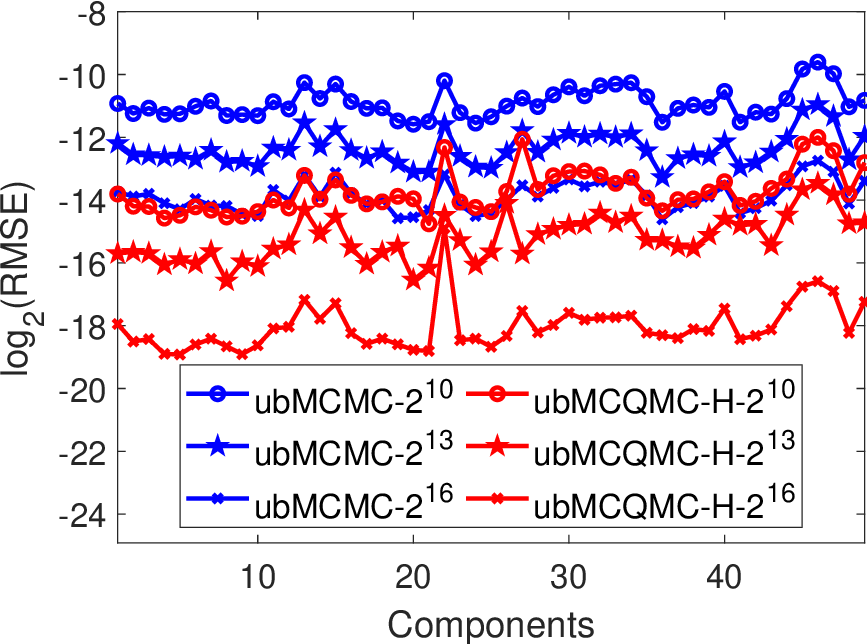}
	\end{minipage}
}
\caption{RMSEs for the Pima and German datasets. Unbiased MCQMC methods are combined with Approach 3.}
\label{fig:logistic_tvar}
\vspace{-6mm}
\end{figure}

\begin{table}[htbp]
\centering
\caption{Total RMSE reduction factors and runtimes for the German dataset.}
\scalebox{0.85}{
	\begin{tabular}{ccccccc}
		\hline
		\multirow{2}[6]{*}{$N$}  &     & \multicolumn{2}{c}{ubMCMC} &    & \multicolumn{2}{c}{Approach 3} \bigstrut\\
		\cline{3-4}\cline{6-7}       &       & RMSE  & Time (s)  &       & RRF  & Time (s)  \bigstrut\\
		\cline{1-7}    
		$2^{10}$     &       & 4.14E-03 & 4.87 &       & 6.06  & 8.85     \bigstrut[t]\\
		$2^{13}$     &       & 1.54E-03 & 27.95 &       & 6.41  & 42.41    \\
		$2^{16}$     &       & 5.22E-04 & 183.38 &       & 11.79 & 260.78  \bigstrut[b]\\
		\hline
	\end{tabular}%
}
\vspace{-4mm}
\label{tab:logistic}%
\end{table}

We then take a small sample size $N$ to compare the efficiency of $F_{k:m}(\black{\mathbf{X}},\black{\mathbf{Y}})$ and that of MCMC estimators. We only consider Approach 3 with Liao's method, which performed better than Approach 1 and Approach 2, but consumed nearly twice runtime of ubMCMC, see Tables \ref{tab:diffm} and \ref{tab:logistic}. For a fair comparison, we set $c = 2c_0$ in \eqref{eq:efficiency}. Following the setting in Bayesian linear regression model, the expected cost, the total variance, and the loss of efficiency of ubMCMC and ubMCQMC are presented in Table \ref{tab:inefflogistic}. We observe that for the Pima dataset, the efficiency of ubMCMC with $N = 640$ and the efficiency of ubMCQMC with $N = 26$ are comparable to the efficiency of standard MCMC, implying that using ubMCQMC saves a sample size of $640/26 = 24.6$ times. However, for the case of $d=1049$ in using German dataset, the sample size saving factor is $60.9$. This is because when $k$ and $E[\tau]$ are large, the gap between expected cost and $N$ becomes significant. Furthermore, due to the slow convergence rate of ubMCMC, it requires a large $N$ to make the loss efficiency approach $1$. In contrast, unbiased MCQMC has a faster convergence rate, only a small $N$ is needed to achieve the efficiency of classical MCMC. Consequently, the sample size saving factor becomes significant under these circumstances. 

\begin{table}[htbp]
\vspace{-4mm}
\centering
\caption{The loss of efficiency for the Pima and German datasets.}
\scalebox{0.85}{
	\begin{tabular}{cccccccccccccc}
		\hline
		\multirow{2}[4]{*}{} & \multicolumn{6}{c}{Pima}                    &       & \multicolumn{6}{c}{German} \bigstrut\\
		\cline{2-7}  \cline{9-14}       &   $k$  & $N$   & Cost  & Variance   & Loeff & SF &  &   $k$  & $N$   & Cost  & Variance  & Loeff & SF  \bigstrut\\
		\hline
		ubMCMC & 32    & 640   & 678  & 7.92E-07 & 1.06  & \multirow{2}[2]{*}{24.6} &  & 260   & 3900  & 4195 & 4.67E-07 & 1.07  & \multirow{2}[2]{*}{60.9} \bigstrut[t]\\
		ubMCQMC-L & 32    & 26    & 127 & 3.96E-06 & 0.96  &       &       
		& 260   & 64    & 719 & 2.75E-06 & 1.08 & \bigstrut[b]\\
		\hline
	\end{tabular}%
}
\label{tab:inefflogistic}%
\end{table}%

\section{Conclusions and discussions}
\label{sec:conclusions}
By incorporating array-WCUD sequences into the first-step sampling of the maximal coupling method in unbiased MCMC, we propose unbiased MCQMC method for Gibbs samplers. This method maintains the unbiasedness of the estimator and potentially accelerates the convergence rate of unbiased MCMC.  Moreover, without relying on Markov property, which was required in \citep{jacob2020}, we provide a simpler proof for the upper bound of the second moment of the bias correction term. This pays the way to study the RMSE rate of unbiased MCQMC if the error rate of MCQMC is well established. Through a series of comprehensive experiments, unbiased MCQMC has shown significant improvement in RMSE reduction compared to unbiased MCMC. Especially in low dimensional cases, compared to the $O(N^{-1/2})$ convergence rate of unbiased MCMC, unbiased MCQMC achieves an almost $O(N^{-1})$ convergence rate. In high dimensional cases, unbiased MCQMC suffers from the curse of dimensionality as plain QMC. Surprisingly, for the Bayesian logistic regression model  with P\'olya Gamma Gibbs sampler in $1049$ dimensions, unbiased MCQMC also can reduce the RMSE dozens of times. In the setting of parallelization, unbiased MCQMC can also save dozens of times the sample size to achieve a similar efficiency of unbiased MCMC.

There are some limitations to unbiased MCQMC methods. Firstly, unbiased MCQMC is amenable to smooth update functions in \eqref{gibbs}. For M-H samplers that involve an acceptance-rejection process or any sampling process that includes such a mechanism, the uniformity of the resulting samples will be destroyed, suppressing the effectiveness of unbiased MCQMC. On the other hand, it is necessary to explicitly express the transition kernel in the form \(\bm{X}_t = \phi(\bm{X}_{t-1}, \bm{u})\), where uniform variables are used to advance the chain. To this end, special care should be taken for complex sampling processes. Secondly, as observed in our experiments, unbiased MCQMC suffers from the curse of dimensionality. Lastly, one needs to choose an appropriate burn-in period $k$ for unbiased MCQMC. The value of $k$ used in unbiased MCMC and MCQMC methods should be different. We prefer to choose a larger value of $k$ in unbiased MCQMC. In our numerical experiments, we chose twice the 99\% sample quantile of \(\tau\) rather than the 99\% sample quantile  used in \citep{jacob2020}. This strategy worked well in our numerical examples. However, the factor of two is an empirical choice. 
Our numerical examples exhibits short meeting times, in which the $k$ was set between 6 and 260. For some extreme settings, such as the bad initial distribution or ineffective coupling strategy, the meeting time could be large \citep[Section 6]{jacob2020}. It is unclear how to choose a good value of $k$ for such cases. 
However, it should be noted that, regardless of the size of the dimension and the choice of \(k\) (even if \(k=1\)), unbiased MCQMC consistently outperforms unbiased MCMC as shown in both the main text and Appendix \ref{app:k}, respectively. Addressing or mitigating these limitations to better leverage the performance of unbiased MCQMC will be the directions for our future work.






\bibliographystyle{plain}
\bibliography{newreferences}

\newpage
\appendix

The following Appendix contains the introducion of Liao's method and Harase's method, the implementation of unbiased MCQMC algorithm, and numerical experiments on the impact of bias correction term and the burn-in period $k$.  

\section{An introduction of Liao's method and Harase's method}\label{app:method}
We know that using typical low-discrepancy sequences directly in MCMC will lead to incorrect results due to strong correlations between the sequences. Liao's method is to randomize the order of the given $d$-dimensional low-discrepancy sequences such as Sobol' sequence. Mathematically, it can be described as follows: Let $N$ be the sample size, consider $d$-dimensional low-discrepancy sequences $\boldsymbol{u}_1,\boldsymbol{u}_2,\ldots,\boldsymbol{u}_N \in (0,1)^d$ and a random permutation $\bm\zeta$ of the set $\{1,2,\ldots,N\}$. Define $v^{(L)}_{((i-1)d+j)} = \boldsymbol{u}_{\zeta(i),j}$, where $\boldsymbol{u}_{\zeta(i),j}$ represents the $j$-th coordinate of $\boldsymbol{u}_{\zeta(i)}$, $i=1,2,\ldots,N$, $j=1,2,\ldots,d$. Then, $\{v_i^{(L)}\}_{i=1}^{Nd}$ is an array-WCUD sequence of length $Nd$ \citep{tribble2008}. We refer to Figure 1 in \citep{tribble2008} for a more intuitive understanding of the application of Liao's method. 
The variable matrix generated by Liao's method is 
\begin{equation}\label{eq:vl}
	V_L = \left[\begin{array}{cccc}
		v^{(L)}_1 & v^{(L)}_2 & \cdots & v^{(L)}_{d} \\
		v^{(L)}_{d+1} & v^{(L)}_{d+2} & \cdots & v^{(L)}_{2d} \\
		\vdots & \vdots & \ddots & \vdots \\
		v^{(L)}_{(N-1)d+1} & v^{(L)}_{(N-1) d+2} & \cdots & v^{(L)}_{Nd}
	\end{array}\right] = \left[\begin{array}{c}
		\boldsymbol{u}_{\zeta(1)} \\
		\boldsymbol{u}_{\zeta(2)}\\
		\vdots  \\
		\boldsymbol{u}_{\zeta(N)}
	\end{array}\right].
\end{equation}
Following \cite{tribble2008}, we apply the random shifts algorithm \citep{cranley1976} given in Algorithm \ref{alg:cp} to randomize the matrix $V_L$. Denote the randomized matrix as $V_L^*$. By Lemma 2 in \cite{tribble2008}, the driving sequence generated by $V_L^*$ is also an array-WCUD sequence.

While Liao's method may not outperform other methods at specific sample sizes in practice, it does not have any requirements on the sample size. Therefore, in our numerical experiments, we utilize Liao's method to achieve this flexibility.
\begin{algorithm}[htbp]
	\caption{The random shifts algorithm}
	\label{alg:cp}
	\begin{algorithmic}[1]
		\STATE{Given a $N\times d$ matrix $A$ with entries $a_{ij}$.}
		\STATE{For each column $1\le j\le d$, draw $z_j\sim U[0,1]$ independently.}
		\STATE{For the $(i,j)$-th entry, set  $a^*_{i,j} = a_{i,j}+z_j\text{~mod~} 1$.}
		\RETURN Matrix $A^*$ with entries $a^*_{ij}$.
	\end{algorithmic}
\end{algorithm}

Using CUD sequences in MCMC is akin to using the entire period of a random number generator with a small period \cite{niederreiter1986}. We also use the LFSR generator constructed with the optimization of the t-value by Harase in specific sample sizes \cite{harase2021}.  Harase \cite{harase2021} emphasized that LFSR generators essentially possess a digital net structure, and for sequences with digital net structures, the t-value serves as a critical optimization metric. 

Following \citep[Section 8.1]{chen}, LFSR generator constructs its variates from a sequence $\{b_i\}_{i=0}^{\infty} \in \{0,1\}$. Given a positive integer $n$ and initial states $b_0,b_1,\ldots,b_{n-1}$ that are not entirely composed of zeros and some integers $a_0,a_1,\ldots,a_{n-1}$ of zeros and ones, LFSR generator of order $n$ updates $b_i,i\ge n$ by the recursive formula
$$
b_i = \sum_{j=0}^{n-1} a_j b_{i-n+j} ~~ \text{mod~} 2.
$$
Obviously, the $n-$tuple $(b_i,b_{i+1},\ldots,b_{i+n-1})$ has a period at most $N-1$ with  $N= 2^{n}$. The period is exactly equal to $N-1$ if and only if the characteristic polynomial $p(x) = \sum_{j=0}^{n-1} a_jx^j + x^n$ is a primitive polynomial over the Galois field with two elements \cite{booknieder1992}. Given the sequence $\{b_i\}_{i\ge0}$ and an offset (or step size) $g>0$ such that gcd$(g,N-1) = 1$, $v^{(H)}_i$ is constructed with 
\begin{equation*}
	v^{(H)}_i=\sum_{j=0}^{w-1} b_{(i-1)g+j} 2^{-j-1}, \quad i=1,2,\ldots,
\end{equation*}
where $w$ is a digit number and $\{v^{(H)}_i\}_{i=1}^{\infty}$ have a period of $N-1$. Harase proposed $w=32$ and provided a table of $g$ and $(a_0,a_1,\ldots,a_{n-1})$ for $10\le n\le32$ in Table 1 of \cite{harase2021}. If $\gcd(d,N-1) = 1$, we can simply define a $N\times d$ variable matrix by the following $N$ points, 
\begin{equation*}
	(0,\ldots,0),(v^{(H)}_1, \cdots, v^{(H)}_{d}),(v^{(H)}_{d+1}, \cdots, v^{(H)}_{2d}),\cdots,(v^{(H)}_{(N-2)d+1},\cdots, v^{(H)}_{(N-1)d}).
\end{equation*}
If $\gcd(d,N-1) = m > 1$, then we generate $m$ distince short loops as follows,
\begin{equation*}
	(v^{(H)}_j, \cdots, v^{(H)}_{j+d-1}),(v^{(H)}_{j+d}, \cdots, v^{(H)}_{j+2d-1}),\cdots,(v^{(H)}_{j+((N-1)/m)-1)d},\cdots, v^{(H)}_{j+((N-1)/m)d-1}),
\end{equation*}
for $j = 1,\ldots,m$. The $N\times d$ variable matrix is defined by concatenating $(0,\ldots,0)$ with them in order. 

Denote the variable matrix generated by Harase's method as $V_H$. Following \cite{chen,harase2021,tribble}, we adopt the digital shifts algorithm \citep{owenqmc} given in Algorithm \ref{alg:ds} to randomize the matrix $V_H$, and following \cite{harase2021}, we do the digital shift up to the $32$-nd digit in Algorithm \ref{alg:ds}. Denote $V_H^*$ as the resulting matrix after the randomization. By Theorem 5.2.2 in \cite{tribble}, the driving sequence generated by $V_H^*$  is an array-WCUD sequence. 

\begin{algorithm}[htbp]
	\caption{The digital shifts method}
	\label{alg:ds}
	\begin{algorithmic}[1]
		\STATE{Given a $N\times d$ matrix $A$ with entries $a_{ij}$.}
		\STATE{For each column $1\le j\le d$, draw $z_j\sim U[0,1]$ independently.}
		\STATE{Write $z_j$'s binary expansion as $0.z_j^{(1)}z_j^{(2)}\ldots $.}
		\STATE{For the $(i,j)$-th entry $a_{i,j}$, write its binary expansion as $0.a_{i,j}^{(1)}a_{i,j}^{(2)}\ldots$.}
		\STATE{Set $b_{i,j}^{(l)} = a_{i,j}^{(l)} + z_j^{(l)} \text{~mod~} 2$ and $a^*_{i,j} = \sum\limits_{l=1}^{\infty}b_{i,j}^{(l)}2^{-l}$.}
		\RETURN Matrix $A^*$ with entries $a^*_{i,j}$.
	\end{algorithmic}
\end{algorithm}

\section{Algorithm Implementation} \label{app:alg}
In summary, our unbiased MCQMC algorithm can be described in Algorithm \ref{alg:ubmcqmc} as follows. 
\begin{algorithm}[H]
	\caption{Unbiased MCQMC algorithm}
	\label{alg:ubmcqmc}
	\begin{algorithmic}[1]
		\STATE{Given integers $k$, sample size $N$, $m = N+k-1$, initial distribution $\pi_0$, update function $\phi$ and generator function $\psi_i$ defined in \eqref{gibbs}, conditional density $P_i,i=1,2,\ldots,s$ of the components of each part with $d = \sum_{i=1}^s d_i$.}
		\STATE{Construct a $N\times d$ variable matrix $V$ by Harase's method or Liao's method}
		\STATE{Set $V^*$ as the matrix $V$ after randomization}
		\STATE{Construct $(k-1)\times d$ random matrix $V_R$ generated by IID sequence.}
		\STATE{Set $m \times d$ matrix $V^{(\mathbf{X})} = [V_R;V^*]$}
		\STATE{Sample ${\bm{X}_0},\black{{\bm{Y}_0}}\sim \pi_0$, and set $\bm{u}_1$ as the first row of $V^{(\mathbf{X})}$, ${\bm{X}_{1}} = \phi({\bm{X}_0},\bm{u}_1)$, $t=1,\tau=\infty$}
		\STATE{Repeat Steps 8-22 until $t \ge \max(m,\tau)$}
		\IF{$t < m$}
		\STATE{Set $\bm{u}_{t+1}$ as the $(t+1)$-th row of $V^{(\mathbf{X})}$}
		\ELSE \STATE{Sample $\bm{u}_{t+1} \sim U(0,1)^d$}
		\ENDIF
		\IF{$t < \tau$}
		\STATE{Set $\black{({\bm{x}_1}, \ldots,{\bm{x}_s}) = {\bm{X}_t}, ({\bm{y}_1}, \ldots,{\bm{y}_s}) = {\bm{Y}_{t-1}}}$, $({\bm{v}}_1, \ldots,\bm{v}_s) = \bm{u}_{t+1}$}
		\FOR{$i$ in $1$ to $s$}
		\STATE{Set $p_i(\cdot) = P_i(\black{{\bm{x}_{-i}}},\cdot),q_i(\cdot) = P_i(\black{{\bm{y}_{-i}}},\cdot)$}
		\STATE{Sample $w \sim \black{\mathbf{U}}(0,1)$, set $\black{{\bm{x}_{i}}} = \psi_i(\black{{\bm{x}_{-i}}},\bm{v}_i)$, if $p_i(\black{{\bm{x}_{i}}})w \le q_i(\black{{\bm{x}_{i}}})$, set $\black{{\bm{y}_{i}}} = \black{{\bm{x}_{i}}}$, otherwise, sample $\bm{v} \sim \black{\mathbf{U}}(0,1)^{d_i}$ and $w' \sim \black{\mathbf{U}}(0,1)$, set $\black{{\bm{y}_{i}}} = \psi_i(\black{{\bm{y}_{-i}}},\bm{v})$ until $q_i(\black{{\bm{y}_{i}}})w' > p_i(\black{{\bm{y}_{i}}})$}
		\ENDFOR
		\STATE{Set $\black{{\bm{X}_{t+1}} = ({\bm{x}_1}, \ldots,{\bm{x}_s}), \black{{\bm{Y}_{t}}} = ({\bm{y}_1}, \ldots,{\bm{y}_s})}$. If $\black{{\bm{X}_{t+1}} = {\bm{Y}_{t}}}$, set $\tau = t$ and $t = t+1$} 
		\ELSE
		\STATE{Set $\black{{\bm{X}_{t+1}} = \phi({\bm{X}_t},\bm{u}_{t+1}), {\bm{Y}_t} = {\bm{X}_{t+1}}}$ and $t = t+1$}
		\ENDIF
		\STATE{Return $F_{k:m}(\black{\mathbf{X}},\black{\mathbf{Y}})$ by \eqref{eq:Fkm}}
	\end{algorithmic}
\end{algorithm}

\section{Numerical experiments on the impact of the bias correction term and the burn-in period}\label{app:k}
In Section \ref{sec:experiments}, we used array-WCUD sequences after the $(k-1)$-th step, where \( k=2\bar k \) and $\bar k$ is the 99\% sample quantile of \( \tau \) based on 1000 samples of the meeting time $\tau$. In this section, we look at the impact of $k$ and the effect of using array-WCUD sequence in the burn-in period.
To study the effect of $k$, we take \(k \in\{ 1,\ \bar k,\ 2\bar k,\ 4\bar k\}\). To investigate the necessity of using array-WCUD sequences in the burn-in period, in the following tables, ``Case 1" denotes using WCUD sequences throughout including the burn-in period, while ``Case 2" uses IID sequences instead in the burn-in period followed by WCUD sequences subsequently. 

We repeat the experiment $25$ times with $R=100$ and $N=2^{10}$ to obtain 25 total RMSEs and then take the average as the final total RMSE. Moreover, we show the coefficient of variation (CV) for the $25$ total RMSEs to examine the stability, which is defined by the standard deviation of the $25$ total RMSEs divided by the final total RMSE. 

Table \ref{tab:boston1} shows the results for the Bayesian linear regression model with the Boston dataset. We can see that for all cases, ubMCQMC consistently outperforms  ubMCMC, especially for Harase's method in Case 2. 
These results favor the necessity of using array-WCUD sequence after the burn-in period.
On the other hand, the value of $k$ has an impact on the effectiveness of ubMCQMC. If we take a small $k$, the impact of the bias correction term could be significant for ubMCQMC. This motivates us to take a large value of $k$. But, an excessively large $k$ only results in sample waste for ubMCQMC. Determining an appropriate $k$ in ubMCQMC is a worthwhile consideration for future work. The choice of $2\bar{k}$ is merely empirical in our work. However, despite the greater influence of the bias correction term on ubMCQMC compared to ubMCMC, for any choice of \(k\) (even if \(k=1\)), ubMCQMC consistently outperforms ubMCMC.

\begin{table}[htbp]
	\centering
	\caption{The total RMSE reduction factors and CVs for the Boston dataset.}
	\begin{tabular}{cccccccccc}
		\hline
		\multirow{2}[2]{*}{$N = 2^{10}$}   & \multirow{2}[2]{*}{$k$} & \multicolumn{2}{c}{ubMCMC} & & \multicolumn{2}{c}{ubMCQMC-H} & & \multicolumn{2}{c}{ubMCQMC-L}\\
		\cline{3-4}\cline{6-7}\cline{9-10}
		&      & RMSE & CV & & RRF  & CV & & RRF  & CV \bigstrut\\
		\hline
		\multirow{4}[2]{*}{Case 1} 
		& 1     & 6.46E-04 & 0.06 & & 3.94  & 0.30  & & 3.81  & 0.36 \bigstrut[t]\\
		& 4     & 6.18E-04 & 0.05 &  & 15.26 & 0.98  & & 9.87  & 0.29 \\
		& 8     & 6.22E-04 & 0.05 &  & 12.16 &  0.02 & & 8.84  & 0.06 \\
		& 16    & 6.22E-04 & 0.06 & & 9.84  & 0.03 & & 6.97  & 0.05 \bigstrut[b]\\
		\hline
		\multirow{4}[2]{*}{Case 2} 
		& 1   &  6.42E-04 & 0.06 & & 3.58 & 0.37 & & 3.44 & 0.37 \bigstrut[t]\\
		& 4   &      6.24E-04 & 0.06 & & 22.53 & 1.65 & & 13.12 & 0.04 \\
		& 8  &   6.23E-04 & 0.04 &  & \textbf{75.83} & \textbf{0.07} & & 12.96 & 0.05 \\
		& 16  &      6.19E-04 & 0.05 &  & 76.01 & 0.07 &  & 12.73 & 0.05 \bigstrut[b]\\
		\hline
	\end{tabular}%
	\label{tab:boston1}%
\end{table}%

We next present the results of the Bayesian probit regression model with the Vaso dataset and the Bayesian logistic regression model with the Pima dataset in Tables \ref{tab:vaso1}-\ref{tab:pima1}, respectively. Similar phenomenons can be observed in these cases. 

\begin{table}[H]
	\centering
	\caption{The total RMSE reduction factors and CVs for the Vaso dataset.}
	\begin{tabular}{cccccccccc}
		\hline
		\multirow{2}[2]{*}{$N = 2^{10}$}   & \multirow{2}[2]{*}{$k$} & \multicolumn{2}{c}{ubMCMC} & & \multicolumn{2}{c}{ubMCQMC-H} & & \multicolumn{2}{c}{ubMCQMC-L}\\
		\cline{3-4}\cline{6-7}\cline{9-10}
		&      & RMSE & CV & & RRF  & CV & & RRF  & CV \bigstrut\\
		\hline
		\multirow{4}[2]{*}{Case 1}   
		& 1   &  4.04E-02 & 0.40 & & 1.27  & 0.63 &  & 1.19  & 0.66 \bigstrut[t]\\
		& 41  &  2.64E-02 & 0.24 & & 4.43  & 0.21 & & 2.81  & 1.10 \\
		& 82  &  2.54E-02 & 0.12 & & 3.35  & 0.06 & & 2.97  & 0.09 \\
		& 164 &  2.60E-02 & 0.07 &  & 2.54  & 0.07 & & 2.28  & 0.08 \bigstrut[b]\\
		\hline
		\multirow{4}[2]{*}{Case 2} 
		& 1  & 4.69E-02 & 0.41 & & 1.87  & 0.55 &  & 1.27  & 0.73 \bigstrut[t]\\
		& 41 & 2.77E-02 & 0.33 &  & 5.78  & 0.39 & & 3.66  & 0.64 \\
		& 82 & 2.46E-02 & 0.07 & & \textbf{6.43} & \textbf{0.08} & & 4.31  & 0.06 \\
		& 164& 2.39E-02 & 0.07 & & 6.12  & 0.08 &  & 4.10  & 0.07 \bigstrut[b]\\
		\hline
	\end{tabular}%
	\label{tab:vaso1}%
\end{table}%

\begin{table}[H]
	\centering
	\caption{The total RMSE reduction factors and CVs for the Pima dataset.}
	\begin{tabular}{cccccccccc}
		\hline
		\multirow{2}[2]{*}{$N = 2^{10}$}   & \multirow{2}[2]{*}{$k$} & \multicolumn{2}{c}{ubMCMC} & & \multicolumn{2}{c}{ubMCQMC-H} & & \multicolumn{2}{c}{ubMCQMC-L}\\
		\cline{3-4}\cline{6-7}\cline{9-10}
		&      & RMSE & CV & & RRF  & CV & & RRF  & CV \bigstrut\\
		\hline
		\multirow{4}[2]{*}{Case 1}   
		& 1     & 3.24E-03 & 0.09 &  & 1.19  & 0.17 & & 1.37  & 0.17 \bigstrut[t]\\
		& 16    & 2.69E-03 & 0.26 &  & 1.18  & 0.45 & & 2.30  & 0.89 \\
		& 32    & 2.28E-03 & 0.03 & & 4.30  & 0.02 & & 4.48  & 0.02 \\
		& 64    & 2.32E-03 & 0.04 & & 3.61  & 0.03 &  & 3.55  & 0.02 \bigstrut[b]\\
		\hline
		\multirow{4}[2]{*}{Case 2} 
		& 1   & 3.49E-03 & 0.14 & & 1.30  & 0.21 & & 1.45  & 0.18 \bigstrut[t]\\
		& 16  & 2.60E-03 & 0.27 & & 2.95  & 1.18 & & 2.68  & 1.11 \\
		& 32  & 2.27E-03 & 0.04 & & \textbf{9.17} & \textbf{0.03} & & 6.86  & 0.02 \\
		& 64  & 2.25E-03 & 0.03 & & 9.15  & 0.03 & & 6.78  & 0.03 \bigstrut[b]\\
		\hline
	\end{tabular}%
	\label{tab:pima1}%
\end{table}%

\end{document}